
\documentclass[11pt,a4paper]{article}%
\usepackage{geometry}
\usepackage{graphicx}
\usepackage{amsmath, amsfonts, amssymb}%
\providecommand{\U}[1]{\protect\rule{.1in}{.1in}}
\newtheorem{theorem}{Theorem}

\newtheorem{corollary}[theorem]{Corollary}

\newtheorem{lemma}[theorem]{Lemma}

\newtheorem{hypothesis}{Hypothesis}

\newtheorem{remark}[theorem]{Remark}

\newenvironment{proof}[1][Proof]{\textbf{#1.} }{\ \rule{0.5em}{0.5em}}
\newcommand{\tmop}[1]{\ensuremath{\operatorname{#1}}}
\begin{document}

\title{Full well-posedness of point vortex dynamics\\corresponding to stochastic 2D Euler equations}
\author{F. Flandoli$^{1}$, M. Gubinelli$^{2}$ , E. Priola$^{3}$\\{\small {(1) Dipartimento di Matematica Applicata ``U. Dini'', Universit\`a di
Pisa, Italia}}\\{\small { (2) CEREMADE \& CNRS (UMR 7534), Universit\'e Paris Dauphine,
France}}\\{\small { (3) Dipartimento di Matematica, Universit\`a di Torino, Italia }}}
\maketitle

\begin{abstract}
The motion of a finite number of point vortices
on a two-dimensional periodic domain is considered.
 In the deterministic case
  it is known to be well posed only for almost every
 initial configuration. Coalescence of vortices may occur for certain
initial conditions. We prove that when a \emph{generic} stochastic
perturbation compatible with the Eulerian description is introduced,
the point
vortex motion becomes  well posed for
every initial configuration, in particular coalescence disappears.

\bigskip
\noindent\textbf{MSC (2000):} 76B47 (Vortex flows) ;  60H10  (Stochastic ordinary differential equations)
\end{abstract}

\section{Introduction}

Existence and uniqueness questions for the 2D Euler equations
\begin{equation}
\frac{\partial u}{\partial t}+u\cdot\nabla u+\nabla p=0,\quad\mathrm{div}\,%
u=0,\quad u|_{t=0}=u_{0} \label{Eulerian for velocity}%
\end{equation}
are well understood in suitable functions spaces (see, for instance,
\cite{MajdaBert} and \cite{PL Lions} for a review of several results). One of
the classical results is the existence of solutions when $u_{0}$ is in the
Sobolev space $W^{1,2}$ and the uniqueness when the (scalar) vorticity
$\xi=\nabla^{\perp}\cdot u=\partial_{2}u_{1}-\partial_{1}u_{2}$ is bounded.

The case when the vorticity is a signed measure received also a lot
of attention, due to the interest in the evolution of vortex
structures like sheets or points of vorticity concentration. See
\cite{MajdaBert} for a review. Deep existence and stability results
for distributional   vorticities which do  not change sign have
been proved, first for a class of distributions which includes
vortex sheets but not vortex points, then also for point vortices
 (see among others \cite{Delhort}, \cite{Poup}). Uniqueness is an open
problem in all such cases. When the vorticity has variable sign and
is, for instance, pointwise distributed, even a reasonable
formulation of the Euler equations is missing. However, in the case
of point vortices, there are good reasons to replace the Eulerian
formulation with a Lagrangian one, based on the autonomous motion of
a finite number of point vortices.

The Lagrangian formulation of point vortex motion gives rise to a
finite dimensional ordinary differential equation, which is well
posed only for almost all initial configurations with respect to
Lebesgue measure. One can give explicit examples of initial
configurations such that different vortices coalesce in finite time.
In such a case the Lagrangian equations loose meaning. Perhaps a
proper Eulerian description could be meaningful also after the
coalescence time, but a rigorous formulation of this fact is not
known.

The purpose of this paper is to show that the previous pathology, namely the
existence of initial configurations which coalesce in finite time, is
prevented by the presence of suitable noise in the system. The point vortex
motion is well defined for all times and all initial configurations, under
suitable noise perturbations (which may be arbitrarily small). Let us describe
our aim in more detail.

As shortly recalled in the next section, Euler equations can be recast in
terms of vorticity as the system
\begin{equation}
\frac{\partial\xi}{\partial
t}+u\cdot\nabla\xi=0,\quad\xi|_{t=0}=\xi_{0},
\label{vorticity eq}%
\end{equation}%
\begin{equation}
u=-\nabla^{\perp}\Delta^{-1}\xi. \label{Biot Sav}%
\end{equation}
Concepts of weak solutions of the Euler equations are meaningful
even for distributional vorticity $\xi$, when $u$ is sufficiently
regular; square integrable is sufficient, see the theory of vortex
sheet solutions, where there are at least some existence theorems
(see \cite{MajdaBert}). The limit case when $\xi$ is the sum of
finite number of delta Dirac masses
\begin{equation}
\xi( .,t) =\sum_{i=1}^{n}\omega_{i}\delta_{x_{t}^{i}}
 \label{vorticity field}%
\end{equation}
is unfortunately too singular:\ the velocity field is not square
integrable (so even the weakest form of (\ref{Eulerian for
velocity}) is not meaningful) and its singularity coincides with the
delta Dirac points of the vorticity (so also (\ref{vorticity eq}) is
not meaningful). In spite of this, there are good arguments, based
on the limit of regular solutions supported around the ideal point
vortices \cite{March Pulv}, to accept that a certain finite
dimensional differential equation for the position of the point
vortices is the correct physical description of the evolution of
 $\xi $ in  (\ref{vorticity field}). The equations for the evolution
of the positions of point vortices have the
form%
\begin{equation}
\frac{dx_{t}^{i}}{dt}=\sum_{j\neq i}\omega_{j}K( x_{t}^{i}-x_{t}^{j}) ,\quad
i=1,...,n. \label{det point vort intro}%
\end{equation}
A few more details are explained in the next section  (see also \cite{Newton}).

If we call $X_{0}=( x_{0}^{1},...,x_{0}^{n}) $ the initial condition in
$\mathbb{R}^{2n}$ of the system of $n$ point vortices, a result of existence
and uniqueness for \emph{Lebesgue almost every} $X_{0}$ is known, see
\cite{March Pulv}. But there are examples of initial condition $X_{0}$ such
that vortices collide in finite time and a global solution does not exist.

The purpose of this research is to investigate the effect of a multiplicative
noise on the Euler equations, in the form
\begin{equation}
d\xi+u\cdot\nabla\xi dt+\sum_{k=1}^{N}\sigma_{k}( x) \cdot\nabla\xi\circ
d\beta_{t}^{k}=0,\quad\xi|_{t=0}=\xi_{0}, \label{stoch vorticity eq}%
\end{equation}
 where  $\sigma_{k}( x) $ are suitable  2d vector fields
and $\{ \beta^{k}_t\} _{k=1,...,N}$ are independent Brownian
motions. Note that   $u$ is again reconstructed from $\xi$ by
means of Biot-Savart law (\ref{Biot Sav}). Linear transport
equations are regularized by multiplicative noise, see
\cite{FlaGubPriola}: non-uniqueness phenomena of the deterministic
case disappear under the random perturbation. Our aim,  in
principle, is to prove a similar regularizing effect  for the
nonlinear problem (\ref{stoch vorticity eq}). However this is a very
difficult problem and at the moment we are not able to solve it. The
result we present in this work is in some sense a first  step and
concerns  the stochastic point vortex dynamics which corresponds to
equation (\ref{stoch vorticity eq}) and has the
form%
\begin{equation}
dx_{t}^{i}=\sum_{j\neq i}\omega_{j}K( x_{t}^{i}-x_{t}^{j}) dt+\sum_{k=1}%
^{N}\sigma_{k}( x_{t}^{i})\circ d\beta_{t}^{k},\quad i=1,...,n.
\label{stoch point eq introduction}%
\end{equation}
We prove that, under suitable assumptions on the fields
$\sigma_{k}( x) $ (those of Section \ref{section assumptions}), this
stochastic point vortex dynamics is globally well posed (in
particular coalescence of point vortices disappear) for \emph{all}
initial conditions. This is a stochastic improvement of the
deterministic theory, as in \cite{FlaGubPriola}.

For the improvements in well posedness of the linear transport equations
considered in \cite{FlaGubPriola}, it was sufficient to take constant fields
$\sigma_{k}$. Here, to avoid point vortex coalescence, we need
space-dependent  fields with a high degree of hypoellipticity, a technically
complex condition (see Hypothesis~\ref{r})  which however is generically
satisfied  (see Section~\ref{sec:generic}). Notice that in the trivial case when $\omega
_{j}=0$ for every $j=1,...,n$, system (\ref{stoch point eq introduction}) is
the so called $n$-point motion associated to the vector fields $\sigma_{k}$. A
priori this is highly degenerate; for this reason we need suitable
hypoellipticity conditions.

It would be trivial to improve the regularity of the deterministic system
(\ref{det point vort intro}) by adding independent Brownian motions to each
component, but this would not correspond to a Lagrangian point vortex
formulation of stochastic Euler equations.

Let us mention that Kotelenez~\cite[Chapter 8]{Kotelenez} considered a
similar stochastic perturbation of Euler equation and the associated point
vortex dynamics with the aim to understand the physical interest and properties of
the model. However he is not concerned with the regularizing properties of such kinds of noises.

\medskip
\noindent\textbf{Plan of the paper.} In Section~\ref{sec:two} we
explain the formal relation between  the stochastic  Euler
equation and the SDE for the point vortices;  this relation fixes
the form of the noise  allowed in the SDE. In Section~\ref{sec:main}
we state our hypothesis and prove the main result about
well-posedness of point vortex dynamics for all initial conditions.
In Section~\ref{sec:generic} we give a self-contained proof of the
fact that our hypothesis is  generically satisfied. Last we gather
in the appendix a series of well-known result on the density of the
law of SDEs under various conditions on the vector fields.

\medskip
\noindent\textbf{Acknowledgement.}  The authors would like to thank
Olivier Glass for an interesting discussion about genericity and
the transversality theorem.

\section{Stochastic 2D Euler equations and vortex dynamics}
\label{sec:two}

The aim of this section is to provide an heuristic motivation for system
(\ref{stoch point eq introduction}). A rigorous link between it and the
original stochastic Euler equations is not given in this work. It is already a
difficult problem in the deterministic case, where one of the best available
justifications is the result which states that unique solutions of Euler
equations corresponding to smoothing of the distributional initial vorticity,
converge in the weak sense of measures to (\ref{vorticity field}), see
\cite{March Pulv}. The result holds as far as point vortices do not coalesce.

Due to the difficulty of this subject, we do not aim here to give rigorous
results on the link between (\ref{stoch point eq introduction}) and
(\ref{stoch vorticity eq}), but only to provide an heuristic motivation. For
this reason, the rest of this section is not always written in rigorous terms
and we intentionally miss important details like functions spaces, regularity
of functions, etc.

\subsection{Deterministic case}

In dimension 2, the vorticity field $\xi=\nabla^{\perp}\cdot u=\partial
_{2}u_{1}-\partial_{1}u_{2}$ satisfies equation (\ref{vorticity eq}) where
$\xi_{0}=\nabla^{\perp}\cdot u_{0}$. If $\varphi$ (called potential) solves
the equation
\[
\Delta\varphi=-\xi
\]
then $u=\nabla^{\perp}\varphi$ satisfies $\xi=\nabla^{\perp}\cdot u$. Hence,
formally speaking, $u$ can be reconstructed from $\xi$ by the so called
Biot-Savart law (\ref{Biot Sav}).

Depending on the fact that we consider the equations in full space with square
integrable conditions at infinity, or on a torus with periodic boundary
conditions or other cases, one can make rigorous and uniquely defined the
previous procedure of reconstruction of $u$ from $\xi$. Let us work on the
2D-torus $\mathbb{T}= \mathbb{R}^{2}/( 2\pi\mathbb{Z}^{2}) $. Denote by $G$
the Green's function of $-\Delta$ on $\mathbb{T}$, then
$G\left(  x\right)  =\sum_{k\in\mathbb{Z}^{2}\backslash\{0\}}\left\Vert k\right\Vert
^{-2}e^{ik\cdot x}$.
The distribution $G$ is in fact a function, with a logarithmic divergence at
$x=0$, smooth everywhere else and satisfies (cf.
page 18 of \cite{March Pulv}):
\begin{align*}
C_{1}\log\vert x\vert-C_{3}  &  \leq G( x) \leq C_{2}\log\vert x\vert
+C_{3}\quad\\
\vert DG( x) \vert &  \leq C_{3}\vert x\vert^{-1},\quad\vert D^{2}G( x)
\vert\leq C_{3}\vert x\vert^{-2}%
\end{align*}
for all $x\in[ -\pi,\pi] ^{2}$, for some positive constants $C_{1},C_{2}%
,C_{3}$.

Given a periodic field $\xi$ with suitable regularity, a periodic
(distributional) solution of $\Delta\varphi=-\xi$ is given by
$
\varphi\left(  x\right)  = \int_{\mathbb{T}}  G\left(  x-y\right)  \xi\left(
y\right)  dy
$.
All other solutions differ by constants. The vector field $u=\nabla^{\perp
}\varphi$ is thus uniquely defined from $\xi$:%
\begin{equation}
u\left(  x\right)  =\int_{\mathbb{T}} K\left(  x-y\right)  \xi\left(
y\right)  dy \label{integral Biot Sav}%
\end{equation}
where
\[
K( x) =\nabla^{\perp}G( x) = \sum_{k= (k_{1},k_{2})\in\mathbb{Z}^{2}\backslash\{0\}}\frac{i
k^{\perp}}{\left\Vert k\right\Vert ^{2}}e^{ik\cdot x},\;\;\;  k^{\perp}=
(k_{2}, -k_{1}).
\]
Equation (\ref{integral Biot Sav}) is the integral form of Biot Savart law,
used throughout the paper. Since $u$ is the unique field such that $\xi
=\nabla^{\perp}\cdot u$, it is the velocity field associated to $\xi$.

A limit case of vorticity field is the distributional one given by
(\ref{vorticity field}) where $x_{t}^{i}$ is the position of point vortex $i$
at time $t$ and $\omega_{i}$ is its intensity (independent of time because
vorticity is just transported). This distributional field does not satisfy
Euler equations in the usual distributional sense: the nonlinear term
($\varphi$ is a smooth test function)%
\[
\int\xi(x,t)u(x,t)\cdot\nabla\varphi(x)dx
\]
is not well defined a priori, because the velocity field $u(x,t)$ associated
to (\ref{vorticity field}) is singular exactly at the delta Dirac points of
$\xi$. However, there are limit arguments, see \cite{March Pulv} which
rigorously motivate the following closed set of equations for the positions of
point vortices:
\begin{equation}
\frac{dx_{t}^{i}}{dt}=\sum_{j\neq i}\omega_{j}K(x_{t}^{i}-x_{t}^{j}),\quad
i=1,...,n\label{ODE}%
\end{equation}
Let us briefly explain this equation. Formally, point vortices are transported
by the fluid, hence they should satisfy $\frac{dx_{t}^{i}}{dt}=u(x_{t}^{i},t)$
where $u(x,t)$ is the velocity field associated to the vorticity field
(\ref{vorticity field}). If we put (\ref{vorticity field}) in
(\ref{integral Biot Sav}) we get
\[
u(x,t)=\sum_{j=1}^{n}\omega_{j}K(x-x_{t}^{j}).
\]
However, this expression is correct in all points $x$ different from the
vortex points themselves; notice that $K(x)$ diverges at $x=0$ so $u(x,t)$
should be properly interpreted at $x=x_{t}^{i}$. It turns out (see \cite{March
Pulv}) that the correct interpretation is
\begin{equation}
u(x_{t}^{i},t)=\sum_{j\neq i}\omega_{j}K(x_{t}^{i}-x_{t}^{j}%
).\label{velocity of point vortices}%
\end{equation}
Giving us eq.~(\ref{ODE}).

\subsection{Lack of full well-posedness}

Equations (\ref{ODE}) are not trivial since the vector field is
divergent when two particles collide; and there is no repulsion (but
also no attraction) when particles approach each other.
Nevertheless, in the periodic case, equations (\ref{ODE}) are
well-posed for almost every initial condition $X_{0}=(
x_{0}^{1},...,x_{0}^{n}) $ with respect to Lebesgue measure on the
product space. One can interpret this result by saying that the
system is almost surely well-posed when initial conditions are
chosen at random in a uniform way.

In the whole space the same result is known under the additional assumption that $\{
\omega_{i}\} _{i=1,...,n}$ satisfy $\sum_{i\in\pi}\omega_{i}\neq0$ for all
$\pi\subset\{ 1,...,n\} $.

The restriction to almost every initial condition is not just a weakness of
the technique:  there are counterexamples in the form of explicit initial configurations which
collide in finite time, see~\cite{March Pulv}.

\subsection{Stochastic case}

We shall prove well-posedness for every initial condition under a
suitable random perturbation. The idea behind is simply that the
noise makes the same effect of a randomization of the initial
conditions. However, there is a subtle difference between these two
randomizations. In the case of initial conditions, Lebesgue measure
in product space is used, thus the initial positions of particles
are perturbed independently one of each other. In the noise case, it
would be trivial to perturb each particle independently: this
produces immediately a strong regularization which ultimately would
imply well-posedness. Such kind of noise, however, has no meaning in
terms of Euler equation. What we want is a noisy version of Euler
equation which is solvable in the case of distributional point
vortex fields. When we write the noise at the level of the point
vortex dynamics, \emph{the noise is the same for all vortices} (but
since it is a space dependent noise, it is computed ad different
spatial points). This is in principle  a  source of difficulties.

\vskip 1mm We consider the stochastic equation with multiplicative
noise (\ref{stoch vorticity eq}) where $\sigma_{k}( x) $ are 2D
smooth vector fields and $\{ \beta^{k}_t\} _{k=1,...,N}$  are a
finite sequence  of independent Brownian motions defined  on a
stochastic basis $( \Omega,F,( F_{t}) ,P) $ (fixed once and for
all).

The associated dynamics
of $n$ point vortices is the stochastic system in $\mathbb{R}^{2n}$
\begin{equation}
dx_{t}^{i}=\sum_{j\neq i}\omega_{j}K( x_{t}^{i}-x_{t}^{j}) dt+\sum_{k=1}%
^{N}\sigma_{k}( x_{t}^{i}) \circ d\beta_{t}^{k},\quad x_{0}^{i}=x^{i} \label{SDE}%
\end{equation}
for $i=1,...,n$, with each single $x_{t}^{i}$ in $\mathbb{R}^{2}$ and where $\circ$ denote Stratonovich integration.

Let us formally show that the measure valued vorticity field
(\ref{vorticity field}), with $x_{t}^{i}$ given by a solution of equations
(\ref{SDE}), solve (\ref{stoch vorticity eq}). The weak form of the stochastic
Euler equation is (assuming $\mathrm{div}\,\sigma_{k}=0$, $k =1, \ldots, N$)
\[
d\langle\xi,\varphi\rangle=\langle\xi,u\cdot\nabla\varphi\rangle dt+\sum
_{k=1}^{N}\langle\xi,\sigma_{k}\cdot\nabla\varphi\rangle\circ d\beta_{t}^{k}%
\]
where we have denoted by $\varphi$ a smooth test function and by
$\langle.,.\rangle$ the dual pairing. Let $\xi$ be the distribution defined by
(\ref{vorticity field}). From It\^{o} formula for $\varphi( x_{t}^{i}) $ in
Stratonovich form we get
\begin{align*}
d\langle\xi,\varphi\rangle &  =\sum_{i=1}^{n}\omega_{i}d\varphi( x_{t}^{i})
=\sum_{i=1}^{n}\omega_{i}\nabla\varphi( x_{t}^{i}) \sum_{j\neq i}\omega_{j}K(
x_{t}^{i}-x_{t}^{j}) dt\\
&  +\sum_{i=1}^{n}\omega_{i}\sum_{k=1}^{N}\nabla\varphi( x_{t}^{i}) \sigma
_{k}( x_{t}^{i}) \circ d\beta_{t}^{k}.
\end{align*}
The second term is the right one:
\[
\sum_{k=1}^{N}\langle\xi,\sigma_{k}\cdot\nabla\varphi\rangle\circ d\beta
_{t}^{k}=\sum_{i=1}^{n}\omega_{i}\sum_{k=1}^{N}\sigma_{k}( x_{t}^{i})
\cdot\nabla\varphi( x_{t}^{i}) \circ d\beta_{t}^{k}.
\]
As to the first term, recall that the velocity field $u$ associated to point
vortices is given by (\ref{velocity of point vortices}). Then
\begin{align*}
\langle\xi,u\cdot\nabla\varphi\rangle &  =\sum_{i=1}^{n}\omega_{i}u( x_{t}%
^{i}) \cdot\nabla\varphi( x_{t}^{i})
 =\sum_{i=1}^{n}\sum_{j\neq i}\omega_{i}\omega_{j}K( x_{t}^{i}-x_{t}^{j})
\cdot\nabla\varphi( x_{t}^{i})
\end{align*}
and thus also the first term is the right one. This completes the heuristic proof.

A very important remark,  already mentioned in the previous section,  is that
the noise in this system is the same for all particles. Thus this is very
similar to the so called $n$-point motion of a single SDE. The regularizing
effect of the noise at the level of the $n$-point motion is a very non-trivial
fact. For instance, the easiest non-degenerate noise, namely the simple
additive one ($\sigma_{k}$ are 2D vectors)
\[
dx_{t}^{i}=\sum_{j\neq i}\omega_{j}K( x_{t}^{i}-x_{t}^{j}) dt+\sum_{k=1}%
^{N}\sigma_{k}d\beta_{t}^{k}%
\]
cannot yield any better result than the deterministic case, because the change
of variables  $y_{t}^{i}=x_{t}^{i}-\sum_{k=1}^{N}\sigma_{k}d\beta_{t}^{k}$
leads to the equation
\[
dy_{t}^{i}=\sum_{j\neq i}\omega_{j}K( y_{t}^{i}-y_{t}^{j}) dt
\]
which is exactly the deterministic one. If collapse happens for an initial
condition of this equation, the same initial condition produces collapse in
the previous SDE.

On the contrary, a strongly space dependent noise may contrast collapse. When
point vortices come close one to the other, the noise should be sufficiently
un-correlated (at small distances)\ to perturb in a generic way the motion of
the two vortices and produce the same effect of a random perturbation of
initial conditions.

\section{Main results and proofs}
\label{sec:main}
\subsection{Regularization by noise}\label{section assumptions}

We consider system (\ref{SDE}) on the 2D-torus
$\mathbb{T}=\mathbb{R}^{2}/( 2\pi\mathbb{Z}^{2}) $. It is a
$C^{\infty}$  compact connected Riemannian manifold with the smooth
metric induced by Euclidean metric of $\mathbb{R}^{2}$. In fact, for
simplicity, we may assume we work on the full space $\mathbb{R}^{2}$
and all the vector fields and functions are $2\pi $-periodic, but
sometimes the interpretation as a compact manifold is more
illuminating. We will consider a fixed choice $\{ \omega_{j}\}
_{j=1,...,n}\subset$ $\mathbb{R}$ of vortex intensities.

Let $\Gamma$ be the set of all $\left(  x^{1},...,x^{n}\right)  \in
{\mathbb{T}}^{n}$ such that $x^{i}=x^{j}$ for some $i\neq j$  ($\Gamma$  is
the union of the generalized diagonals of $\mathbb{T}^{n}$).
Let $\left\{  \sigma_{k}\right\}  _{k=1,...,N}$ be a finite number of smooth
vector fields on ${\mathbb{T}}$. Introduce the associated vector fields on
$\mathbb{T}^{n}$:
\begin{equation}
A_{\sigma_{k}}(x^{1},\ldots,x^{n})=A_{k}(x^{1},\ldots,x^{n})=\big(\sigma
_{k}(x^{1}),\ldots,\sigma_{k}(x^{n})\big)\label{dopo}%
\end{equation}
Recall that given vector fields $A,B$ in $\mathbb{R}^{m}$, their Lie bracket $[A,B]$ is the vector fields in
$\mathbb{R}^{m}$ defined by
\[
[ A,B] =( A\cdot\nabla) B-( B\cdot\nabla) A.
\]

We assume that $\left\{  \sigma_{k}\right\}  _{k=1,...,N}$ satisfies:

\begin{hypothesis}
\label{r}
\parbox{0cm}{}
\begin{enumerate}
\item The vector fields $\sigma_{k}$ are periodic, infinitely differentiable and $\mathrm{div}%
\sigma_{k}=0$ ;

\item (Bracket  generating condition) The vector space spanned by the vector fields
\[
A_{1},...,A_{N},\qquad[ A_{i},A_{j}] ,1\leq i,j\leq N,\qquad[ A_{i},[
A_{j},A_{k}] ] ,1\leq i,j,k\leq N,...
\]
at every point $x\in\Gamma^{c}$ is $\mathbb{R}^{2n}$.
\end{enumerate}
\end{hypothesis}

The second assumption is a form of H\"{o}rmander's condition. It
will ensures that the law at  any time $t>0$
of the solution of a
regularized stochastic  equation is absolutely continuous with respect to
Lebesgue measure if we start outside the diagonal $\Gamma$ (see also
the appendix).

Under this hypothesis we are able to prove the following result of
well-posedness of the dynamics for  all initial $n$-point
configurations.

\begin{theorem}
\label{main theorem}Under Hypothesis~\ref{r},
for all
$X_{0}=(x_{0}^{1},...,x_{0}^{n})\in{\mathbb{T}}^{n}\backslash\Gamma$
equation (\ref{SDE}) has one and only one global strong solution.
\end{theorem}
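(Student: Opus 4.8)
The plan is to reduce the theorem to a non-collision statement and then rule out collisions by an a priori estimate that turns Hypothesis~\ref{r} (through the absolute continuity it produces) into integrability of the singular drift. First I would record local well-posedness: on the open set $\mathbb{T}^{n}\setminus\Gamma$ the drift $b^{i}(X)=\sum_{j\neq i}\omega_{j}K(x^{i}-x^{j})$ and the fields $A_{k}$ are $C^{\infty}$ and locally Lipschitz, so classical SDE theory yields a unique strong solution up to the stopping time
\[
\tau=\sup_{m}\tau_{m},\qquad \tau_{m}=\inf\{t:\ \min_{i\neq j}|x_{t}^{i}-x_{t}^{j}|\leq 1/m\}.
\]
Because $\mathbb{T}^{n}$ is compact, the solution can fail to be global only by reaching $\Gamma$, so everything reduces to proving $\tau=\infty$ a.s. To exploit absolute continuity I would replace $K$ by a bounded smooth truncation $K^{(m)}$ equal to $K$ on $\{|x|>1/m\}$; the resulting equation has smooth bounded coefficients on the compact manifold and hence a global strong solution $X^{(m)}$, which by pathwise uniqueness coincides with the genuine solution on $[0,\tau_{m})$.

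The non-collision mechanism I would use is a logarithmic Lyapunov functional $V(X)=-\sum_{i<j}\log|x^{i}-x^{j}|$, which is bounded below on $\mathbb{T}^{n}$ and satisfies $V(X_{\tau_{m}})\geq \log m-C_{0}$. It\^o's formula applied on $[0,T\wedge\tau_{m}]$ (where $X^{(m)}$ is smooth) gives $E[V(X^{(m)}_{T\wedge\tau_{m}})]=V(X_{0})+E\int_{0}^{T\wedge\tau_{m}}\mathcal{L}V\,dt$, where $\mathcal{L}=b\cdot\nabla+\tfrac12\sum_{k}(A_{k}\cdot\nabla)^{2}$. The point is that $\mathcal{L}V$ is only mildly singular. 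The second-order part is bounded: since $\sigma_{k}(x^{i})-\sigma_{k}(x^{j})=O(|x^{i}-x^{j}|)$, this vanishing offsets the $1/|x^{i}-x^{j}|^{2}$ from the Hessian of $\log$ and makes $(A_{k}\cdot\nabla)^{2}\log|x^{i}-x^{j}|$ bounded near the diagonal (so this term need not even be repulsive—its role is entirely in producing the density below). In the drift part $b\cdot\nabla\log|x^{i}-x^{j}|$, the most dangerous contribution, coming from $\omega_{j}K(x^{i}-x^{j})$ acting on $\log|x^{i}-x^{j}|$, cancels to leading order because $K(r)=c\,r^{\perp}/|r|^{2}+O(|r|)$ is perpendicular to $r$ and $G$ is even; the surviving self-interaction is bounded. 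What remains are the cross terms $\sum_{l\neq i,j}\omega_{l}K(x^{i}-x^{l})\cdot\nabla\log|x^{i}-x^{j}|$, of size $|K(x^{i}-x^{l})|/|x^{i}-x^{j}|$.

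These cross terms are exactly where the noise must enter. Viewed as a function on $\mathbb{T}^{2n}$, $|K(x^{i}-x^{l})|/|x^{i}-x^{j}|\sim |x^{i}-x^{l}|^{-1}|x^{i}-x^{j}|^{-1}$ is a product of two codimension-$2$ singularities and therefore lies in $L^{p}(\mathbb{T}^{2n})$ for every $p<2$ (a product of two locally $L^{p}$, $p<2$, factors in transverse variables is still integrable). Under Hypothesis~\ref{r} the appendix gives, for $t>0$, a density $\rho_{t}^{(m)}$ for the law of $X_{t}^{(m)}$ started off $\Gamma$; the plan is to use it with $q>2$ conjugate to such a $p$ to bound $E[\,|\mathcal{L}V|(X_{t}^{(m)})\,]\leq \|\mathcal{L}V\|_{L^{p}}\,\|\rho_{t}^{(m)}\|_{L^{q}}$ and then integrate in time against $\int_{0}^{T}\|\rho_{t}^{(m)}\|_{L^{q}}\,dt\leq C$, uniformly in $m$. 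Granting this, $\sup_{m}E[V(X^{(m)}_{T\wedge\tau_{m}})]<\infty$, and comparing with $V(X_{\tau_{m}})\geq\log m-C_{0}$ yields $P(\tau_{m}\leq T)\leq C/\log m\to0$. Since $\{\tau\leq T\}\subset\{\tau_{m}\leq T\}$ for every $m$, this forces $P(\tau\leq T)=0$ for all $T$, i.e. $\tau=\infty$ a.s.; existence and pathwise uniqueness then follow from the local theory on $\Gamma^{c}$.

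The hard part will be the uniform density estimate $\int_{0}^{T}\|\rho_{t}^{(m)}\|_{L^{q}}\,dt\leq C$ with $q>2$. Two difficulties sit here. First, the noise is genuinely degenerate: absolute continuity and any quantitative $L^{q}$ control come only from the bracket-generating condition via Malliavin calculus / hypoelliptic heat-kernel bounds, and I would need these bounds to be stable as the regularization is removed (that is, not to blow up with the sup-norm of $K^{(m)}$), presumably by first establishing the density for the driftless $n$-point motion and incorporating the drift through the estimate rather than through the coefficients. Second, $\|\rho_{t}^{(m)}\|_{L^{q}}$ diverges as $t\to 0^{+}$, so the time integral must be treated on a small initial interval separately, using that $\mathrm{dist}(X_{0},\Gamma)>0$ keeps the drift bounded until the process has had time to approach the diagonal. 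Everything else—the local well-posedness, the It\^o computation, and the logarithmic barrier argument—is routine once this estimate is in hand.
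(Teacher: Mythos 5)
There is a genuine gap, and it sits exactly where you put it: the uniform density estimate $\int_{0}^{T}\|\rho_{t}^{(m)}\|_{L^{q}}\,dt\leq C$, $q>2$, uniformly in the truncation $m$. Everything up to that point (local theory on $\Gamma^{c}$, the truncation, the logarithmic functional, the orthogonality cancellation $K\perp\nabla G$, the identification of the cross terms $|x^{i}-x^{l}|^{-1}|x^{i}-x^{j}|^{-1}$ as the only dangerous contribution, their $L^{p}$-integrability for $p<2$) is correct and coincides with the first half of the paper's argument. But the estimate you need is not a hard technical step left to fill in; under Hypothesis~\ref{r} it is arguably out of reach by the means you indicate. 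The noise is genuinely degenerate (only bracket-generating), so Girsanov cannot be used to ``incorporate the drift through the estimate'': the singular drift is a $2n$-dimensional vector field that is in general not in the range of the $N$-dimensional diffusion coefficient. And the Malliavin/hypoelliptic heat-kernel bounds that Hörmander's condition provides depend quantitatively on sup-norms of the coefficients and their derivatives, which blow up with $\|K^{(m)}\|_{\infty}$ as $m\to\infty$; nothing in the appendix of the paper (or in the standard literature it cites) gives $L^{q}$ control on $\rho_{t}^{(m)}$ uniform in $m$, nor even non-uniform $L^{q}$ bounds with the required time-integrability at $t=0^{+}$, where the blow-up rate is governed by a homogeneous dimension that can be large. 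Your proposed fix for the small-time interval (``the drift stays bounded until the process approaches the diagonal'') is circular, since controlling the approach to the diagonal is precisely the statement being proved.

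The paper avoids this quantitative problem entirely, by two soft devices. First, instead of estimating $E[h(X_{t}^{(m)})]$ for a fixed $X_{0}$ via a density, it integrates the Lyapunov inequality over $X_{0}$ with respect to Lebesgue measure and uses that the regularized flow preserves Lebesgue measure on ${\mathbb{T}}^{n}$ (both drift and noise are divergence free): then $E\int h(\varphi_{t}^{\delta}(X_{0}))\,dX_{0}=\int h\,dY$ exactly, with no density bound whatsoever, and Chebyshev plus Borel--Cantelli give well-posedness for \emph{almost every} initial condition (Theorem~\ref{teo 1}). In your language, this amounts to replacing the unknown $\|\rho_{t}^{(m)}\|_{L^{q}}$ by the constant $1$, which is legitimate only after averaging over the starting point. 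Second, to pass from a.e.\ $X_{0}$ to all $X_{0}\in\Gamma^{c}$, hypoellipticity is used only \emph{qualitatively}: for any fixed $X_{0}\in\Gamma^{c}$ and any $\varepsilon>0$, the law of the regularized solution at time $\varepsilon$ is absolutely continuous (Theorem~\ref{Ito} in the appendix), hence charges no mass to the Lebesgue-null set $N$ of bad initial data, and the Markov property restarts the process from a good configuration; letting $\varepsilon\to0$ finishes the proof. If you want to rescue your scheme, the realistic route is to graft these two ideas onto it: keep your Lyapunov computation, but derive the non-collision bound after averaging in $X_{0}$ (measure preservation), and use the density only to say that $X_{\varepsilon}$ a.s.\ lands outside the exceptional null set.
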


Before going to the proofs,
let us make some remarks on our
hypothesis. The bracket generation condition appears already in  the
papers \cite{BaxStroock} and \cite{DolgKK} which study the
asymptotic behavior of stochastic flows (among
other properties, the
Lyapunov exponents and the large deviations for additive
functionals). In \cite[Section 2]{DolgKK} it is stated that the
bracket generating condition in Hypothesis~\ref{r} is \emph{generic}
among smooth vector fields. We have been unable to find a proof of
this statement in the literature (both stochastic or more
dynamical-system oriented) and so in Section~\ref{sec:generic} we give a
self-contained and elementary argument which justifies  this
statement.

\begin{remark}
Instead of the noise taken from \cite{BaxStroock},\cite{DolgKK}, it is
natural to consider an infinite dimensional noise $W( x,t) =\sum_{k=1}%
^{\infty}\sigma_{k}( x) \beta_{t}^{k}$, for instance the isotropic
divergence free Brownian field which generates the isotropic
Brownian motion, see~\cite{BaxHarris},\cite{LeJan},\cite{LeJan Ray}.
Here we restrict our attention to finite dimensional noise for which
we already know results about absolute continuity of fixed time
marginals.  The interesting fact about infinite dimensional
noise is that it is easy to constructs explicit noises which are
``full'' outside $\Gamma$ and for which it is reasonable to expect
the validity of density results on the law.
\end{remark}

The proof of Theorem~\ref{main theorem} goes through  the study of a
regularized problem where the singular Biot-Savart kernel is
replaced by a smooth one. We first prove well-posedness for almost
every initial conditions (as in the deterministic setting) and then,
exploiting the existence   of a density for the law at fixed time,
we improve to well-posedness for all initial conditions.

\subsection{Regularization}

For sufficiently small $\delta$, let $G^{\delta}( x) $ be a smooth $2\pi
$-periodic function (hence bounded with its derivatives) such that, on $[
-\pi,\pi] ^{2}$,
\[
G^{\delta}( x) =G( x) \text{ for }\vert x\vert>\delta.
\]
Set $K^{\delta}=\nabla^{\perp}G^{\delta}$. We shall use the following
quantitative properties, beside smoothness:
\begin{align*}
C_{1}\log( \vert x\vert\vee\delta) -C_{3}  &  \leq G^{\delta}( x) \leq
C_{2}\log( \vert x\vert\vee\delta) +C_{3}\quad\\
\vert DG^{\delta}( x) \vert &  \leq C_{3}( \vert x\vert\vee\delta) ^{-1}%
,\quad\vert D^{2}G^{\delta}( x) \vert\leq C_{3}( \vert x\vert\vee\delta) ^{-2}%
\end{align*}
for all $x\in[ -\pi,\pi] ^{2}$, for some positive constants $C_{1},C_{2}%
,C_{3}$ (possibly different from those of the same inequalities for $G$ but independent of $\delta$).
We consider the regularized equation
\begin{equation}
dx_{t}^{i,\delta}=\sum_{j\neq i}\omega_{j}K^{\delta}( x_{t}^{i,\delta}%
-x_{t}^{j,\delta}) dt+\sum_{k=1}^{N}\sigma_{k}( x_{t}^{i,\delta})\circ d\beta
_{t}^{k}. \label{delta SDE}%
\end{equation}
which in It\^o form reads
\begin{equation}
dx_{t}^{i,\delta}=\sum_{j\neq i}\omega_{j}K^{\delta}( x_{t}^{i,\delta}%
-x_{t}^{j,\delta}) dt+\frac12 \sum_{k=1}^{N}(\sigma_{k}\cdot \nabla\sigma_{k})( x_{t}^{i,\delta}) dt +\sum_{k=1}^{N}\sigma_{k}( x_{t}^{i,\delta}) d\beta
_{t}^{k}. \label{delta SDE-Ito}%
\end{equation}

We immediately have: for every $X_{0}=( x_{0}^{1},...,x_{0}^{n}) \in
\mathbb{R}^{2n}$, there exists a unique strong solution $( X_{t}^{X_{0}})
_{t\geq0}$ to\ this equation in $\mathbb{R}^{2n}$. We even have a smooth
stochastic flow $\varphi_{t}^{\delta}$ on ${\mathbb{T}}^{n}$, see
\cite{Kunita}.

\subsection{Measure conservation}

Denote the divergence in $\mathbb{R}^{2}$ by $\mathrm{div}_{2}$\ and in
$\mathbb{R}^{2n}$ by $\mathrm{div}_{2n}$.We have
\[
\mathrm{div}_{2n}[ \sum_{j\neq i}\omega^{i}K^{\delta}( x_{i}-x_{j}) ]
_{i=1,...,n}=\sum_{i=1}^{n}\mathrm{div}_{2}[ \sum_{j\neq i}\omega^{i}%
K^{\delta}( x_{i}-x_{j}) ] =0
\]
because $K^{\delta}=\nabla^{\perp}G^{\delta}$. The same is true without
regularization. Moreover,
\[
\mathrm{div}_{2n}[ \sum_{k=1}^{N}\sigma_{k}( x^{i}) \beta_{t}^{k}]
_{i=1,...,n}=\sum_{i=1}^{n}\mathrm{div}_{2}\sigma_{k}( x^{i}) \beta_{t}^{k}=0
\]
because $\mathrm{div}_{2}\sigma_{k}=0$. By classical computations on the
smooth flow $\varphi_{t}^{\delta}$, one can check that its Jacobian
determinant is equal to one, as a consequence of the previous divergence free
conditions. Hence we have:

\begin{lemma}
\label{conservation}For every integrable function $h$ on ${\mathbb{T}}^{n}$,
we have
\[
\int_{{\mathbb{T}}^{n}}h( \varphi_{t}^{\delta}( X_{0}) ) dX_{0}=\int
_{{\mathbb{T}}^{n}}h( Y) dY.
\]

\end{lemma}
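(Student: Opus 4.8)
The plan is to reduce the identity to the statement that the Jacobian determinant of the flow is identically one, and then to establish the latter via the stochastic analogue of Liouville's formula. By the results of Kunita cited above, for almost every realization and every $t\ge 0$ the map $\varphi_t^\delta$ is a $C^\infty$ diffeomorphism of $\mathbb{T}^n$. Writing the change-of-variables formula for the integral on the left-hand side,
\[
\int_{\mathbb{T}^n} h(\varphi_t^\delta(X_0))\,dX_0 = \int_{\mathbb{T}^n} h(Y)\,\bigl|\det D(\varphi_t^\delta)^{-1}(Y)\bigr|\,dY,
\]
it suffices to show that the Jacobian $J_t(X_0):=\det D_{X_0}\varphi_t^\delta(X_0)$ equals $1$ almost surely, for then the inverse Jacobian is also $1$ and the desired identity follows at once.

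To compute $J_t$, I would work with the Stratonovich form (\ref{delta SDE}), whose drift is the field $b(X)=\bigl(\sum_{j\neq i}\omega_j K^\delta(x^i-x^j)\bigr)_{i=1,\dots,n}$ and whose diffusion coefficients are the fields $A_k$ of (\ref{dopo}). The crucial advantage of the Stratonovich formulation is that the spatial derivative flow $J_t=D_{X_0}\varphi_t^\delta$ obeys the linearised equation
\[
dJ_t = Db(X_t)\,J_t\,dt + \sum_{k=1}^N DA_k(X_t)\,J_t\circ d\beta_t^k,
\]
with the ordinary chain rule, exactly as in the deterministic theory. Applying the Jacobi (Liouville) formula $d(\det J_t)=\det J_t\cdot \mathrm{tr}\bigl(J_t^{-1}\,dJ_t\bigr)$ in Stratonovich calculus then yields
\[
d(\det J_t)=\det J_t\Bigl[\,\mathrm{div}_{2n}\,b(X_t)\,dt+\sum_{k=1}^N \mathrm{div}_{2n}\,A_k(X_t)\circ d\beta_t^k\,\Bigr].
\]

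The proof concludes by invoking the divergence computations carried out just before the statement: $\mathrm{div}_{2n}\,b\equiv 0$ because $K^\delta=\nabla^\perp G^\delta$, and $\mathrm{div}_{2n}\,A_k\equiv 0$ because $\mathrm{div}_2\,\sigma_k=0$. Hence the right-hand side above vanishes identically, so $\det J_t$ is constant in $t$; since $J_0=\mathrm{Id}$ gives $\det J_0=1$, we obtain $J_t\equiv 1$ almost surely, and the change-of-variables identity reduces to the assertion of the lemma. The only point requiring care is the rigorous justification of the Stratonovich Liouville formula; this is precisely where the Stratonovich (rather than It\^o) formulation is essential, since the ordinary chain rule removes the second-order corrections that would otherwise spoil the trace computation. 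This justification can either be carried out directly from the linearised equation above by applying the Stratonovich chain rule to $\det J_t$, or imported from Kunita's treatment of the Jacobian of a stochastic flow.
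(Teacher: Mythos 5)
Your proof is correct and follows essentially the same route as the paper: the paper notes that the Stratonovich drift and the noise fields $A_k$ are divergence-free on $\mathbb{T}^n$ and then asserts that "by classical computations on the smooth flow" the Jacobian determinant equals one. Your argument simply makes those classical computations explicit (derivative flow equation in Stratonovich form plus Liouville's formula), which is exactly the intended justification.
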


\subsection{Estimates about coalescence}

Denote by $[ x,y] _{t}$ the mutual quadratic covariation of two continuous
semimartingales $( x_{t}) _{t\geq0}$ and $( y_{t}) _{t\geq0}$. Denote by
$x^{\alpha}$, $\alpha=1,2$, the two components of an element of ${\mathbb{T}}$
in the coordinate frame coming from Euclidean coordinates. If $(
x_{t}^{i,\delta}) $ is a solution of equation (\ref{delta SDE}), then
\begin{align*}
&  [ ( x^{i,\delta}-x^{j,\delta}) ^{\alpha},( x^{i,\delta}-x^{j,\delta})
^{\beta}] _{t}
=\sum_{k=1}^{N}\int_{0}^{t}( \sigma_{k}^{\alpha}( x_{s}^{i,\delta})
-\sigma_{k}^{\alpha}( x_{s}^{j,\delta}) ) ( \sigma_{k}^{\beta}( x_{s}%
^{i,\delta}) -\sigma_{k}^{\beta}( x_{s}^{j,\delta}) ) ds.
\end{align*}

\begin{lemma}
Let $\varphi_{t}^{\delta}$, be the flow on ${\mathbb{T}}^{n}$ associated to
(\ref{delta SDE}). Let $g^{\delta}:{\mathbb{T}}^{n}\rightarrow\mathbb{R}$ be
the function
\[
g^{\delta}( X) =-\sum_{i,j=1...,n,\;i\neq j}G^{\delta}( x^{i}-x^{j}) ,\quad
X=( x^{1},...,x^{n}) .
\]
Then there exists a non negative integrable function $h$ on ${\mathbb{T}}^{n}$
such that
\[
E[ \sup_{t\in[ 0,T] }g^{\delta}( \varphi_{t}^{\delta}( X_{0}) ) ] \leq
g^{\delta}( X_{0}) +\int_{0}^{T}E[ h( \varphi_{t}^{\delta}( X_{0}) ) ] dt.
\]

\end{lemma}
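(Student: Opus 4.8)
The plan is to apply Itô's formula to the smooth, bounded function $g^\delta$ along the flow $X_t:=\varphi_t^\delta(X_0)$, identify the generator $Lg^\delta$, and then bound $Lg^\delta$ by a non-negative $\delta$-independent integrable function while showing the martingale part is controlled by a constant that can be absorbed. Working in Stratonovich form (where the chain rule holds) I would write $dg^\delta(X_t)=Lg^\delta(X_t)\,dt+dM_t$, where $M_t=\int_0^t\sum_i\nabla_{x^i}g^\delta(X_s)\cdot\sum_k\sigma_k(x_s^i)\,d\beta_s^k$ is a martingale and $Lg^\delta=D+C+B$, with $D$ coming from the Biot–Savart drift, $C$ from the Stratonovich-to-Itô correction $\tfrac12\sum_k(\sigma_k\cdot\nabla\sigma_k)$, and $B$ from the second-order part. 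Since $G^\delta$ is even, a short computation gives $\nabla_{x^i}g^\delta=-2\sum_{j\neq i}\nabla G^\delta(x^i-x^j)$, together with the analogous formula for the Hessian.

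The heart of the matter is the second-order term $B$. Collecting, for each unordered pair $(i,j)$, the diagonal contributions from $x^i$ and from $x^j$ together with the off-diagonal one, and using that $D^2G^\delta$ is even and symmetric while the same fields $\sigma_k$ drive every vortex, all these pieces combine into the single expression $-\sum_k D^2G^\delta(x^i-x^j):\big(\sigma_k(x^i)-\sigma_k(x^j)\big)^{\otimes 2}$. This is the crucial cancellation: although $|D^2G^\delta|\leq C(|x|\vee\delta)^{-2}$ is not integrable, the increment $|\sigma_k(x^i)-\sigma_k(x^j)|\leq C|x^i-x^j|$ vanishes linearly as the two vortices approach, so each pair contributes at most a constant, uniformly in $\delta$; hence $|B|$ is bounded by a $\delta$-independent constant. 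The same mechanism tames the martingale: $\sum_i\nabla_{x^i}g^\delta\cdot\sigma_k(x^i)$ reorganises as $-2\sum_{i<j}\nabla G^\delta(x^i-x^j)\cdot(\sigma_k(x^i)-\sigma_k(x^j))$, again bounded by a constant uniformly in $\delta$, so $M$ has deterministically bounded quadratic variation.

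The remaining two terms are only mildly singular. Since $K^\delta=\nabla^\perp G^\delta$ is pointwise orthogonal to $\nabla G^\delta$, the diagonal part of $D$ vanishes, leaving a sum over triples of distinct indices bounded by $C\sum 1/(|x^i-x^m||x^i-x^j|)$; likewise $|C|\leq C\sum_{i\neq j}1/|x^i-x^j|$. Because $1/|x|$, and its product with a second, separated singularity, is locally integrable in two dimensions, both bounds lie in $L^1(\mathbb{T}^n)$ and are independent of $\delta$. Setting $h_0$ equal to the sum of these two bounds and of the constant bounding $|B|$ yields a non-negative, $\delta$-independent $h_0\in L^1(\mathbb{T}^n)$ with $|Lg^\delta|\leq h_0$.

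To finish, take the supremum over $t\in[0,T]$ and then expectations in $g^\delta(X_t)=g^\delta(X_0)+\int_0^t Lg^\delta(X_s)\,ds+M_t$, which gives $E[\sup_{t\leq T}g^\delta(X_t)]\leq g^\delta(X_0)+\int_0^T E[h_0(X_s)]\,ds+E[\sup_{t\leq T}M_t]$. Since $[M]_T\leq c^2NT$ deterministically, Doob's (or the Burkholder–Davis–Gundy) inequality bounds $E[\sup_{t\leq T}M_t]$ by a constant $C_M$ independent of $\delta$; writing $C_M=\int_0^T E[(C_M/T)(X_s)]\,ds$ and absorbing it, the function $h:=h_0+C_M/T$ is the desired non-negative integrable function. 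The main obstacle is exactly the second-order term: without the difference structure $\sigma_k(x^i)-\sigma_k(x^j)$ the Hessian singularity $(|x|\vee\delta)^{-2}$ would be non-integrable and would blow up as $\delta\to 0$, and it is precisely the fact that a single family of fields drives all vortices that produces the perfect square and the uniform bound. For fixed $\delta$ every object is smooth and bounded, so Itô's formula and the martingale property hold globally with no localization needed.
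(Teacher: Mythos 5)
Your proof is correct and follows essentially the same route as the paper's: It\^o's formula applied to $g^{\delta}$ along the flow, the orthogonality $K^{\delta}=\nabla^{\perp}G^{\delta}\perp\nabla G^{\delta}$ killing the worst drift singularity, the difference structure $\sigma_{k}(x^{i})-\sigma_{k}(x^{j})$ taming the Hessian, the Stratonovich correction and the martingale part (handled via BDG), and finally absorption of the constants into the integrable function $h$. The only cosmetic difference is that you differentiate $g^{\delta}$ on $\mathbb{T}^{n}$ and reassemble the second-order terms into the pairwise form, whereas the paper applies It\^o directly to each $G^{\delta}(x_{t}^{i,\delta}-x_{t}^{j,\delta})$ via the quadratic covariation of the difference process — the same computation.
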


\begin{proof}
From It\^{o} formula we have
\[
g^{\delta}( \varphi_{t}^{\delta}( X_{0}) ) =g^{\delta}( X_{0}) -\sum
_{i,j=1...,n,\;i\neq j}( I_{1a}^{ij}( t) +I_{1b}^{ij}( t) +I_{2}^{ij}( t)
+I_{3}^{ij}( t)+I_{4}^{ij}( t) )
\]
where
\[
I_{1a}^{ij}( t) =\int_{0}^{t}\sum_{i^{\prime}\neq i}\omega_{i^{\prime}%
}K^{\delta}( x_{s}^{i,\delta}-x_{s}^{i^{\prime},\delta}) \cdot\nabla
G^{\delta}( x_{s}^{i,\delta}-x_{s}^{j,\delta}) ds
\]%
\[
I_{1b}^{ij}( t) =  - \int_{0}^{t}\sum_{j^{\prime}\neq j}\omega_{j^{\prime}%
}K^{\delta}( x_{s}^{j,\delta}-x_{s}^{j^{\prime},\delta}) \cdot\nabla
G^{\delta}( x_{s}^{i,\delta}-x_{s}^{j,\delta}) ds
\]%
\[
I_{2}^{ij}( t) =\sum_{k=1}^{N}\int_{0}^{t}( \sigma_{k}( x_{s}^{i,\delta})
-\sigma_{k}( x_{s}^{j,\delta}) ) \cdot\nabla G^{\delta}( x_{s}^{i,\delta
}-x_{s}^{j,\delta}) d\beta_{s}^{k}%
\]%
\[
I_{3}^{ij}( t) =\frac{1}{2}\sum_{\alpha,\beta=1}^{2}\int_{0}^{t}\frac
{\partial^{2}G^{\delta}}{\partial x^{\alpha}\partial x^{\beta}}(
x_{s}^{i,\delta}-x_{s}^{j,\delta}) d[ ( x^{i,\delta}-x^{j,\delta}) ^{\alpha},(
x^{i,\delta}-x^{j,\delta}) ^{\beta}] _{s}
\]
\[
I_{4}^{ij}( t) =\frac12\int_{0}^{t}\nabla G^{\delta}(
x_{s}^{i,\delta}-x_{s}^{j,\delta})\cdot [ (\sigma_k\cdot \nabla\sigma_k)(x_s^{i,\delta})-(\sigma_k\cdot\nabla \sigma_k)(x_s^{j,\delta})] d s.
\]
Since $K^{\delta}=\nabla^{\perp}G^{\delta}$ and $\nabla^{\perp}G^{\delta}( x)
$ is orthogonal to $\nabla G^{\delta}( x) $, we have
\[
I_{1a}^{ij}( t) =\int_{0}^{t}\sum_{i^{\prime}\neq i,\;i^{\prime}\neq j}%
\omega_{i^{\prime}}K^{\delta}( x_{s}^{i,\delta}-x_{s}^{i^{\prime},\delta})
\cdot\nabla G^{\delta}( x_{s}^{i,\delta}-x_{s}^{j,\delta}) ds
\]%
\[
I_{1b}^{ij}( t) = - \int_{0}^{t}\sum_{j^{\prime}\neq j,\;j^{\prime}\neq
i}\omega_{j^{\prime}}K^{\delta}( x_{s}^{j,\delta}-x_{s}^{j^{\prime},\delta})
\cdot\nabla G^{\delta}( x_{s}^{i,\delta}-x_{s}^{j,\delta}) ds.
\]
Hence
\[
| I_{1a}^{ij}( t) | \leq C\int_{0}^{t}\sum_{i^{\prime}\neq i,\;i^{\prime}\neq
j}( | x_{s}^{i,\delta}-x_{s}^{i^{\prime},\delta}| \vee\delta) ^{-1}( |
x_{s}^{i,\delta}-x_{s}^{j,\delta}| \vee\delta) ^{-1}ds
\]%
\[
| I_{1b}^{ij}( t) | \leq C\int_{0}^{t}\sum_{j^{\prime}\neq j,\;j^{\prime}\neq
i}( | x_{s}^{j,\delta}-x_{s}^{j^{\prime},\delta}| \vee\delta) ^{-1}( |
x_{s}^{i,\delta}-x_{s}^{j,\delta}| \vee\delta) ^{-1}ds
\]%
\[
\sum_{i,j=1...,n,\;i\neq j}( | I_{1a}^{ij}( t) | +| I_{1b}^{ij}( t) | ) \leq
C\int_{0}^{t}h_{1}^{\delta}( \varphi_{s}^{\delta}( X_{0}) ) ds
\]
where
\[
h_{1}^{\delta}( X) =\sum_{\substack{i,j,l=1...,n\\i\neq j,\;l\neq i,\;l\neq
j}}( | x^{i}-x^{l}| \vee\delta) ^{-1}( | x^{i}-x^{j}| \vee\delta) ^{-1}%
\]
with $X=( x^{1},...,x^{n}) $. Setting
\[
h_{1}( X) =\sum_{\substack{i,j,l=1...,n\\i\neq j,\;l\neq i,\;l\neq j}}( |
x^{i}-x^{l}| ) ^{-1}( | x^{i}-x^{j}| ) ^{-1}%
\]
we have that $h_{1}$ is integrable over ${\mathbb{T}}^{n}$, and $h_{1}%
^{\delta}( X) \leq h_{1}( X) $ for all $X\in{\mathbb{T}}^{n}$. Moreover, By
BDG inequality and the smoothness of $\sigma_{k}$ we have
\begin{align*}
E\left[  \sup_{t\in\left[  0,T\right]  }\left|  I_{2}^{ij}\left(  t\right)
\right|  \right]   &  \leq CE\left[  \left[  I_{2}^{ij},I_{2}^{ij}\right]
^{1/2} _{T}\right] \\
&  \leq C E \Big [ \Big(\int_{0}^{T}\left(  \left|  x_{s}^{i,\delta}%
-x_{s}^{j,\delta}\right|  \vee\delta\right)  ^{-2}\left|  x_{s}^{i,\delta
}-x_{s}^{j,\delta}\right|  ^{2}ds \Big)^{1/2} \Big]
\end{align*}
hence
$
\sum_{i,j=1...,n,\;i\neq j}E[ \sup_{t\in[ 0,T] }| I_{2}^{ij}( t) | ] \leq C
$.
Finally,
\[
 \sup_{t\in [  0,T ]  }  |  I_{3}^{ij}(  t)  |  \leq C\int_{0}^{T}(  |
x_{s}^{i,\delta}-x_{s}^{j,\delta}|  \vee\delta)  ^{-2}|
x_{s}^{i,\delta}-x_{s}^{j,\delta}|  ^{2}ds
\]
and
\[
 \sup_{t\in [  0,T ]  } |  I_{4}^{ij}(  t)  |  \leq C\int_{0}^{T}(  |
x_{s}^{i,\delta}-x_{s}^{j,\delta}|  \vee\delta)  ^{-1}|
x_{s}^{i,\delta}-x_{s}^{j,\delta}| ds
\]
so again
$
\sum_{i,j=1...,n,\;i\neq j}(| I_{3}^{ij}( t) |+| I_{4}^{ij}( t) |) \leq C
$.
Summarizing,
\[
E\left[  \sup_{t\in\left[  0,T\right]  }g^{\delta}\left(  \varphi_{t}^{\delta
}\left(  X_{0}\right)  \right)  \right]  \leq g^{\delta}\left(  X_{0}\right)
\]%
\[
+\sum_{i,j=1...,n,\;i\neq j}E [  \sup_{t\in [  0,T ]  } (
 |  I_{1a}^{ij} (  t )   |  +  |  I_{1b}^{ij} (
t )   |  + |  I_{2}^{ij} (  t )   |  + |
I_{3}^{ij} (  t )   |  + |
I_{4}^{ij} (  t )   |   )   ]
\]%
\[
\leq g^{\delta}\left(  X_{0}\right)  + C\int_{0}^{T}E\left[  h_{1}\left(
\varphi_{s}^{\delta}\left(  X_{0}\right)  \right)  \right]  ds+C.
\]
The proof is complete.
\end{proof}

\begin{corollary}
There is a constant $C>0$ such that for every $\delta>0$%
\[
E\int_{{\mathbb{T}}^{n}}\sup_{t\in[ 0,T] }[ g^{\delta}( \varphi_{t}^{\delta}(
X_{0}) ) ] dX_{0}\leq C<\infty.
\]

\end{corollary}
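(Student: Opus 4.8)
The plan is to integrate the inequality of the previous Lemma over the initial condition $X_{0}\in\mathbb{T}^{n}$ and to exploit the measure-conservation property of the flow. Write that Lemma in the form $E[\sup_{t\in[0,T]}g^{\delta}(\varphi_{t}^{\delta}(X_{0}))]\le g^{\delta}(X_{0})+\int_{0}^{T}E[h(\varphi_{t}^{\delta}(X_{0}))]\,dt$, where $h$ (equal to $Ch_{1}$ plus a constant, in the notation of the proof above) is non-negative, integrable over $\mathbb{T}^{n}$ and, crucially, \emph{independent of} $\delta$. Integrating both sides in $X_{0}$ then reduces the claim to bounding, uniformly in $\delta$, the two quantities $\int_{\mathbb{T}^{n}}g^{\delta}(X_{0})\,dX_{0}$ and $\int_{\mathbb{T}^{n}}\int_{0}^{T}E[h(\varphi_{t}^{\delta}(X_{0}))]\,dt\,dX_{0}$.

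Before integrating I would record that $g^{\delta}$ is bounded below by a constant independent of $\delta$: on the compact torus $|x^{i}-x^{j}|\vee\delta$ stays bounded for small $\delta$, so the upper bound $G^{\delta}(x)\le C_{2}\log(|x|\vee\delta)+C_{3}$ gives $g^{\delta}=-\sum_{i\ne j}G^{\delta}(x^{i}-x^{j})\ge -M$ for some $M=M(n)$. Hence $\sup_{t}g^{\delta}(\varphi_{t}^{\delta}(X_{0}))+M\ge 0$, a measurable non-negative quantity (measurability being routine from continuity of the flow and of $g^{\delta}$), and Tonelli's theorem legitimises exchanging $E$ and $\int_{\mathbb{T}^{n}}dX_{0}$ in the statement, reducing it to a bound on $\int_{\mathbb{T}^{n}}E[\sup_{t}g^{\delta}(\varphi_{t}^{\delta}(X_{0}))]\,dX_{0}$.

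For the first quantity, the two-sided logarithmic bound on $G^{\delta}$ gives $|G^{\delta}(x)|\le C\,\bigl|\log|x|\bigr|+C$ with $C$ independent of $\delta$; since $\log|x|$ is integrable in two dimensions, $\int_{\mathbb{T}}|G^{\delta}(z)|\,dz$ is bounded uniformly in $\delta$, and after factorising each pairwise integral (each $\int_{\mathbb{T}^{n}}G^{\delta}(x^{i}-x^{j})\,dX_{0}=(2\pi)^{2(n-1)}\int_{\mathbb{T}}G^{\delta}(z)\,dz$) so is $\int_{\mathbb{T}^{n}}g^{\delta}(X_{0})\,dX_{0}$. For the second quantity I would apply Fubini (all terms non-negative) and then the measure-conservation Lemma~\ref{conservation}: for each fixed $t$, $\int_{\mathbb{T}^{n}}E[h(\varphi_{t}^{\delta}(X_{0}))]\,dX_{0}=E\int_{\mathbb{T}^{n}}h(\varphi_{t}^{\delta}(X_{0}))\,dX_{0}=\int_{\mathbb{T}^{n}}h(Y)\,dY$, whence the double integral equals $T\int_{\mathbb{T}^{n}}h(Y)\,dY$, finite and independent of $\delta$.

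Combining the three bounds yields $E\int_{\mathbb{T}^{n}}\sup_{t}g^{\delta}(\varphi_{t}^{\delta}(X_{0}))\,dX_{0}\le C$ with $C$ independent of $\delta$, as claimed. The only delicate point — and the real content here — is the \emph{uniformity in} $\delta$: it rests on the facts that the dominating function $h$ of the previous Lemma and the constants in the logarithmic estimates for $G^{\delta}$ can all be taken independent of $\delta$, and that the invariance of Lebesgue measure under $\varphi_{t}^{\delta}$ (Lemma~\ref{conservation}) eliminates the dependence on the flow altogether, leaving only $\delta$-free integrals of $G^{\delta}$ and $h$.
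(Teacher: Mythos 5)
Your proof is correct and follows essentially the same route as the paper: integrate the previous lemma's estimate over $X_{0}$, bound $\int_{{\mathbb{T}}^{n}}g^{\delta}(X_{0})\,dX_{0}$ uniformly in $\delta$ via the logarithmic estimates on $G^{\delta}$, and use Lemma~\ref{conservation} to reduce the $h$-term to $T\int_{{\mathbb{T}}^{n}}h(Y)\,dY$. The details you make explicit (the uniform lower bound on $g^{\delta}$ justifying Tonelli, and the $\delta$-independence of $h$ and of the constants) are left implicit in the paper's two-line computation, but they are precisely the facts it relies on.
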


\begin{proof}
From the previous lemma and lemma \ref{conservation} we have
\begin{align*}
E\int_{{\mathbb{T}}^{n}}\sup_{t\in[ 0,T] }[ g^{\delta}( \varphi_{t}^{\delta}(
X_{0}) ) ] dX_{0}%
& \leq\int_{{\mathbb{T}}^{n}}g^{\delta}( X_{0}) dX_{0}+E\int_{0}^{T}[
\int_{{\mathbb{T}}^{n}}h( \varphi_{t}^{\delta}( X_{0}) ) dX_{0}] dt\\
&  =\int_{{\mathbb{T}}^{n}}g^{\delta}( X_{0}) dX_{0}+T\int_{{\mathbb{T}}^{n}%
}h( Y) dY.
\end{align*}
The proof is complete.
\end{proof}

\begin{corollary}
There exists a constant $C>0$ such that for all $\varepsilon,\delta>0$ we
have
\[
( \lambda_{{\mathbb{T}}^{n}}\otimes P) ( \inf_{i\neq j}\inf_{t\in[ 0,T] }|
x_{t}^{i,\delta}-x_{t}^{j,\delta}| \leq\varepsilon) \leq-\frac{C}{\log(
\varepsilon\vee\delta) }.
\]

\end{corollary}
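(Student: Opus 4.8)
The plan is to deduce this tail estimate from the $L^1$ bound of the previous corollary by Chebyshev's inequality, after showing that the event in question forces the functional $g^{\delta}$ to take a large value at some time. Fix a point $(X_0,\omega)$ in the event
\[
E_{\varepsilon}=\Big\{\inf_{i\neq j}\inf_{t\in[0,T]}|x_t^{i,\delta}-x_t^{j,\delta}|\leq\varepsilon\Big\}.
\]
Since there are finitely many pairs and each path is continuous on the compact interval $[0,T]$, the double infimum is attained: there are indices $i^*\neq j^*$ and a time $t^*\in[0,T]$ with $|x_{t^*}^{i^*,\delta}-x_{t^*}^{j^*,\delta}|\leq\varepsilon$.

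The heart of the argument is a lower bound on $g^{\delta}(\varphi_{t^*}^{\delta}(X_0))$. Recall $g^{\delta}(X)=\sum_{i\neq j}\big(-G^{\delta}(x^i-x^j)\big)$ together with the upper bound $G^{\delta}(x)\leq C_2\log(|x|\vee\delta)+C_3$. On the torus $|x^i-x^j|$ is bounded by the diameter $D$, so every summand obeys $-G^{\delta}(x^i-x^j)\geq -C_2\log D-C_3=:-m$, which gives the uniform (in $\delta$) lower bound $g^{\delta}\geq -n(n-1)m=:-M$. For the two ordered summands of the colliding pair, namely $-G^{\delta}(x_{t^*}^{i^*,\delta}-x_{t^*}^{j^*,\delta})$ and $-G^{\delta}(x_{t^*}^{j^*,\delta}-x_{t^*}^{i^*,\delta})$, the same upper bound (which depends only on the norm) yields, since both norms equal $|x_{t^*}^{i^*,\delta}-x_{t^*}^{j^*,\delta}|\leq\varepsilon$, the much larger total contribution $\geq -2C_2\log(\varepsilon\vee\delta)-2C_3$. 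Keeping the remaining $n(n-1)-2$ terms and estimating each by $-m$, I obtain on $E_{\varepsilon}$
\[
\sup_{t\in[0,T]}g^{\delta}(\varphi_t^{\delta}(X_0))\geq -2C_2\log(\varepsilon\vee\delta)-C_5,\qquad C_5:=2C_3+(n(n-1)-2)m.
\]

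Finally I would set $\widetilde g^{\delta}:=g^{\delta}+M\geq 0$ and apply Markov's inequality to the nonnegative functional $\sup_t\widetilde g^{\delta}$ under the product measure $\lambda_{{\mathbb{T}}^{n}}\otimes P$. Writing $R:=-2C_2\log(\varepsilon\vee\delta)-C_5+M$, the inclusion $E_{\varepsilon}\subseteq\{\sup_t\widetilde g^{\delta}\geq R\}$ together with the previous corollary (which bounds $E\int_{{\mathbb{T}}^{n}}\sup_t g^{\delta}\,dX_0$, hence its shift $E\int_{{\mathbb{T}}^{n}}\sup_t\widetilde g^{\delta}\,dX_0$, by a finite constant $C_6$ independent of $\varepsilon,\delta$) gives
\[
(\lambda_{{\mathbb{T}}^{n}}\otimes P)(E_{\varepsilon})\leq\frac{C_6}{R}=\frac{C_6}{-2C_2\log(\varepsilon\vee\delta)-C_5+M}.
\]
For $\varepsilon\vee\delta$ small the denominator is dominated by $-2C_2\log(\varepsilon\vee\delta)$, which produces a bound of the announced form $-C/\log(\varepsilon\vee\delta)$; for $\varepsilon\vee\delta$ not small the right-hand side $-C/\log(\varepsilon\vee\delta)$ exceeds $1$ once $C$ is taken large, so the estimate holds trivially there. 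Choosing a single $C$ covering both regimes completes the argument.

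The main obstacle I anticipate lies in the lower-bound step: one must ensure that the contribution of all the \emph{non}-colliding pairs is bounded below uniformly in $\delta$ (this is exactly where compactness of the torus and the upper bound $G^{\delta}\leq C_2\log(|x|\vee\delta)+C_3$ are used) while the colliding pair alone supplies the logarithmic divergence. The sign bookkeeping between $g^{\delta}$ and $G^{\delta}$ is the only delicate point; the subsequent Chebyshev estimate and the handling of the two regimes are routine.
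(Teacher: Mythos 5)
Your proposal is correct and follows essentially the same route as the paper's own proof: a pointwise lower bound on $g^{\delta}$ at the collision time (logarithmic divergence from the colliding pair via $G^{\delta}(x)\leq C_{2}\log(|x|\vee\delta)+C_{3}$, uniform torus-diameter bound for the remaining pairs), followed by Chebyshev/Markov under $\lambda_{{\mathbb{T}}^{n}}\otimes P$ using the $L^{1}$ bound of the preceding corollary. The only cosmetic differences are that you shift $g^{\delta}$ by an explicit constant $M$ to make it nonnegative where the paper adds $C_{4}n^{2}$, and you keep both ordered terms of the colliding pair instead of one, which only changes constants.
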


\begin{proof}
We have%
\begin{align*}
g^{\delta}( \varphi_{t}^{\delta}( X_{0}) )  &  =-\sum_{i,j=1...,n,\;i\neq
j}G^{\delta}( x_{t}^{i,\delta}-x_{t}^{j,\delta})\\
&  \geq-\sum_{i,j=1...,n,\;i\neq j}C_{2}\log( \vert x_{t}^{i,\delta}%
-x_{t}^{j,\delta}\vert\vee\delta) -\frac{n( n-1) }{2}C_{3}.
\end{align*}
Given $\varepsilon,\delta>0$, smaller than one, if
$
\inf_{i\neq j}\inf_{t\in[ 0,T] }\vert x_{t}^{i,\delta}-x_{t}^{j,\delta}%
\vert\leq\varepsilon
$
namely if there are $t_{0}\in[ 0,T] $ and $i_{0}\neq j_{0}$ such that $\vert
x_{t_{0}}^{i_{0},\delta}-x_{t_{0}}^{j_{0},\delta}\vert\leq\varepsilon$ then%
\[
g^{\delta}( \varphi_{t_{0}}^{\delta}( X_{0}) ) \geq-C_{2}\log( \varepsilon
\vee\delta) -\frac{n( n-1) }{2}( C_{2}\log2\pi\sqrt{2}+C_{3})
\]
(we have used the fact that $\log( \vert x_{t}^{i,\delta}-x_{t}^{j,\delta
}\vert\vee\delta) \leq\log2\pi\sqrt{2}$) and thus
\[
\sup_{t\in[ 0,T] }g^{\delta}( \varphi_{t}^{\delta}( X_{0}) ) \geq-C_{2}\log(
\varepsilon\vee\delta) -C_{4}n^{2}.
\]
By Chebyshev inequality (notice that $-C_{2}\log( \varepsilon\vee\delta) >0$)
and the previous lemma,
\begin{align*}
&  ( \lambda_{{\mathbb{T}}^{n}}\otimes P) ( \inf_{i\neq j}\inf_{t\in[ 0,T]
}\vert x_{t}^{i,\delta}-x_{t}^{j,\delta}\vert\leq\varepsilon)\\
&  \leq( \lambda_{{\mathbb{T}}^{n}}\otimes P) ( \sup_{t\in[ 0,T] }g^{\delta}(
\varphi_{t}^{\delta}( X_{0}) ) +C_{4}n^{2}\geq-C_{2}\log( \varepsilon
\vee\delta) )\\
&  \leq-\frac{C_{5}n^{2}}{\log( \varepsilon\vee\delta) }.
\end{align*}
The proof is complete.
\end{proof}

\begin{remark}
The function $h$ and the constants $C$ of the previous statements depend on
the number $n$ of point vortices and the time interval $[ 0,T] $.
\end{remark}

\subsection{Well-posedness for Lebesgue almost every initial condition}

As a first consequence of the previous estimates, we prove the same result of
the deterministic case.

Recall that $\Gamma$ is the singular set in ${\mathbb{T}}^{n}$ for the vortex
dynamics, namely the set of all $( x^{1},...,x^{n}) \in{\mathbb{T}}^{n}$ such
that $x^{i}=x^{j}$ for some $i\neq j$. The drift of equation (\ref{SDE}) is
well defined only on $\Gamma^{c}$. Thus the notion of strong solution $(
X_{t}) _{t\geq0}$ to equation (\ref{SDE}) is the classical one for SDEs with
the addition of the condition that%
\[
P( X_{t}\in\Gamma^{c}\text{ for all }t\geq0) =1.
\]

\begin{theorem}
\label{teo 1}For Lebesgue almost every $X_{0}=( x_{0}^{1},...,x_{0}^{n})
\in{\mathbb{T}}^{n}$, equation (\ref{SDE}) has one and only one global strong solution.
\end{theorem}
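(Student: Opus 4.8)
The plan is to transfer the coalescence estimate obtained for the regularized flow $\varphi_t^\delta$ to the original singular equation (\ref{SDE}), using the fact that $K^\delta = K$ away from the diagonal and that the two equations agree until the first coalescence time. First I would fix a generic initial point $X_0 \in {\mathbb{T}}^n \setminus \Gamma$ and, for the regularized problem, define the stopping time $\tau_\varepsilon^\delta = \inf\{t \geq 0 : \inf_{i \neq j} |x_t^{i,\delta}-x_t^{j,\delta}| \leq \varepsilon\}$. On the stochastic interval $[0, \tau_\varepsilon^\delta)$ the pairwise distances exceed $\varepsilon$, so for $\delta < \varepsilon$ the kernels $K^\delta$ and $K$ coincide along the trajectory; hence the unique strong solution of (\ref{delta SDE}) is also a solution of (\ref{SDE}) up to that time, and by pathwise uniqueness the two coincide there. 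This reduces the problem to showing that, for almost every $X_0$, no coalescence occurs, i.e. the minimal pairwise distance stays bounded away from zero for all $t \in [0,T]$.

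Next I would exploit the uniform-in-$\delta$ bound of the third Corollary. Since the estimate
\[
(\lambda_{{\mathbb{T}}^n} \otimes P)\Big(\inf_{i \neq j}\inf_{t \in [0,T]} |x_t^{i,\delta}-x_t^{j,\delta}| \leq \varepsilon\Big) \leq -\frac{C}{\log(\varepsilon \vee \delta)}
\]
holds with $C$ independent of both $\varepsilon$ and $\delta$, I would first let $\delta \to 0$ along a subsequence and then let $\varepsilon \to 0$. The key point is that the right-hand side tends to $0$ as $\varepsilon \vee \delta \to 0$, uniformly. Taking $\delta \leq \varepsilon$ and sending $\varepsilon \downarrow 0$, the bound $-C/\log\varepsilon \to 0$ forces the event that the minimal distance ever reaches zero on $[0,T]$ to have $(\lambda_{{\mathbb{T}}^n} \otimes P)$-measure zero. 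By Fubini, for $\lambda_{{\mathbb{T}}^n}$-almost every $X_0$ the solution started at $X_0$ satisfies $\inf_{i\neq j}\inf_{t\in[0,T]} |x_t^i - x_t^j| > 0$ almost surely, so no collision occurs on $[0,T]$. Letting $T$ range over the integers and intersecting the corresponding full-measure sets yields global existence for almost every $X_0$.

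For uniqueness I would argue pathwise: on any interval on which a given solution stays in $\Gamma^c$, the drift $K$ is smooth and the diffusion coefficients $\sigma_k$ are smooth and bounded, so local Lipschitz estimates give pathwise uniqueness for (\ref{SDE}) up to any stopping time that keeps the trajectory at distance $\geq \varepsilon$ from $\Gamma$; sending $\varepsilon \to 0$ and using the no-coalescence fact just established promotes this to global pathwise uniqueness for almost every $X_0$.

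The main obstacle I expect is the passage to the limit $\delta \to 0$ in a way that genuinely produces a solution of the singular equation (\ref{SDE}) for the fixed initial datum, rather than merely a weak/subsequential limit of regularized flows. The clean way around this is the stopping-time identification in the first paragraph, which avoids any compactness argument: because $K^\delta \equiv K$ off the $\delta$-neighborhood of the diagonal, the regularized and singular dynamics are literally identical before coalescence, so the uniform estimate on $\varphi_t^\delta$ directly controls the singular solution without a limiting procedure on the equation itself. Care is needed to ensure the exceptional $\lambda_{{\mathbb{T}}^n}$-null set does not depend on $\delta$ or $\varepsilon$, which is precisely why the uniformity of the constant $C$ in the Corollary is essential.
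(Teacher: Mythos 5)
Your proposal is correct and follows essentially the same route as the paper: identify the singular dynamics with the regularized flow $\varphi_t^{\delta}$ up to the first hitting time of the $\delta$-neighborhood of $\Gamma$ (so no limiting procedure on the equation is needed), then combine the uniform-in-$\delta$ coalescence estimate of the Corollary with Fubini--Tonelli to conclude that the coalescence time is a.s.\ infinite for Lebesgue-a.e.\ $X_{0}$, and get uniqueness by localization. The only cosmetic difference is in how the limit is organized: you send $\varepsilon\downarrow 0$ directly using monotonicity of the events and the uniformity of the constant $C$, whereas the paper applies Borel--Cantelli along a sequence $\delta_{k}$ with $\sum_{k}|\log\delta_{k}|^{-1}<\infty$; both arguments rest on the same two pillars and yield the same conclusion.
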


\begin{proof}
Denote by $\Gamma_{\delta}$ the closed $\delta$-neighbor of $\Gamma$ in
${\mathbb{T}}^{n}$. Given $X_{0}\in\Gamma_{\delta}^{c}$, denote by
$\tau_{X_{0}}^{\delta}( \omega) $ the first instant when $\varphi_{t}^{\delta
}( X_{0}) \in\Gamma_{\delta}$ and set it equal to $+\infty$ if this fact never
happens. We have $P\left(  \tau_{X_{0}}^{\delta}>0\right)  =1$ by continuity
of trajectories. The solution $\varphi_{t}^{\delta}\left(  X_{0}\right)  $, on
the random interval $\left[  0,\tau_{X_{0}}^{\delta}\right]  $, is also the
unique solution $\left(  X_{t}\right)  $ of equation (\ref{SDE}).  Thus
$\tau_{X_{0}}^{\delta}( \omega) $ is also the first instant when $X_{t}%
\in\Gamma_{\delta}$. Set%
\[
\tau_{X_{0}}( \omega) =\sup_{\delta\in( 0,1) }\tau_{X_{0}}^{\delta}( \omega)
.
\]
By localization, we have a unique solution of equation (\ref{SDE}) on
$[0,\tau_{X_{0}})$. If we prove that $P( \tau_{X_{0}}=\infty) =1$ for a.e.
$X_{0}$, we have proved the theorem. Given $T>0$ and $\delta^{\ast}>0$, it is
sufficient to prove that for Lebesgue a.e. $X_{0}\in\Gamma_{\delta^{\ast}}%
^{c}$, we have $P( \tau_{X_{0}}\geq T) =1$.

Form the last corollary of the previous section we know that%
\[
( \lambda_{{\mathbb{T}}^{n}}\otimes P) ( \inf_{i\neq j}\inf_{t\in[ 0,T] }|
x_{t}^{i,\delta}-x_{t}^{j,\delta}| \leq\delta) \leq-\frac{C}{\log\delta}.
\]
Let $\{ \delta_{k}\} _{k\in\mathbb{N}}$ be a sequence such that the series
$\sum_{k=1}^{\infty}\frac{1}{\log\delta_{k}}$ converges. Take it such that
$\delta_{k}\leq\delta^{\ast}$ for all $k\in\mathbb{N}$. By Borel-Cantelli
lemma, there is a measurable set $N\subset{\mathbb{T}}^{n}\times\Omega$ with
$( \lambda_{{\mathbb{T}}^{n}}\otimes P) ( N) =0$, such that for all $(
X_{0},\omega) \in N^{c}$ there is $k_{0}=k_{0}( X_{0},\omega) \in\mathbb{N}$
such that for all $k\geq k_{0}( X_{0},\omega) $%
\[
\inf_{i\neq j}\inf_{t\in[ 0,T] }| \varphi_{t}^{i,\delta_{k}}( X_{0}) ( \omega)
-\varphi_{t}^{j,\delta_{k}}( X_{0}) ( \omega) | >\delta_{k}%
\]
where $\varphi_{t}^{i,\delta_{k}}( X_{0}) $ is $x_{t}^{i,\delta_{k}}$ when the
initial condition is $X_{0}$. If we restrict ourselves to $( X_{0},\omega) \in
N^{c}\cap( \Gamma_{\delta^{\ast}}^{c}\times\Omega) $, the previous statement
implies $\tau_{X_{0}}^{\delta_{k}}( \omega) \geq T$ for all $k\geq k_{0}(
X_{0},\omega) $. This implies
\[
\tau_{X_{0}}( \omega) \geq T.
\]
We have proved this inequality for all $( X_{0},\omega) \in N^{c}\cap(
\Gamma_{\delta^{\ast}}^{c}\times\Omega) $, namely for almost every $(
X_{0},\omega) $ in $\Gamma_{\delta^{\ast}}^{c}\times\Omega$. By Fubini-Tonelli
theorem, there is a measurable set $\Delta\subset\Gamma_{\delta^{\ast}}^{c}$
with $\lambda_{{\mathbb{T}}^{n}}( \Delta) =1$, such that for all $X_{0}%
\in\Delta$ we have $\tau_{X_{0}}( \omega) \geq T$ with $P$-probability one.
The proof is complete.
\end{proof}

\subsection{Improvement due to the noise}

We may now prove our main result,
Theorem~\ref{main theorem}.

\begin{proof}  (Theorem~\ref{main theorem})
Given $X_{0}\in\Gamma^{c}$, a strong unique local solution on $[0,\tau_{X_{0}%
})$ exists ($\tau_{X_{0}}$ defined in the proof of theorem \ref{teo 1}). Let
us add a point $\Delta$ to ${\mathbb{T}}^{n}$ and set $\varphi_{t}( X_{0})
=\Delta$ for $t\geq\tau_{X_{0}}$, where $\tau_{X_{0}}<\infty$. The family of
processes $\varphi_{t}( X_{0}) $, $X_{0}\in\Gamma^{c}$, so defined, lives in
$\Gamma^{c}\cup\Delta$ for positive times and is Markov.  Then
\[
P( \varphi_{[ \varepsilon,T] }( X_{0}) \in\Gamma^{c}) \newline=\int
_{\Gamma^{c}\cup\{ \Delta\} }P( \varphi_{[ 0,T-\varepsilon] }( Y) \in
\Gamma^{c}) \mu_{\varphi_{\varepsilon}( X_{0}) }( dY)
\]
where $\{  \varphi_{[ \varepsilon,T] }( X_{0}) \in\Gamma^{c} \}  = \{
\omega\in\Omega\; :\; \varphi_{t} (X_{0})(\omega) $ $\in\Gamma^{c},$ for any
$\; t \in[ \varepsilon,T] \}$ and $\mu_{\varphi_{\varepsilon}( X_{0}) }$ is
the law of $\varphi_{\varepsilon}( X_{0}) $. Denote by $N\subset{\mathbb{T}%
}^{n}$ a measurable set such that all initial conditions in $N^{c}$ give rise
to a well posed Cauchy problem. We have
\[
P( \varphi_{[ 0,T-\varepsilon] }( Y) \in\Gamma^{c}) =1
\]
for all $Y\in N^{c}$. Then%
\begin{align*}
&  P( \varphi_{[ \varepsilon,T] }( X_{0}) \in\Gamma^{c}) \geq\int_{N^{c}}P(
\varphi_{[ 0,T-\varepsilon] }( Y) \in\Gamma^{c}) \mu_{\varphi_{\varepsilon}(
X_{0}) }( dY)\\
&  =1-\mu_{\varphi_{\varepsilon}( X_{0}) }( N) .
\end{align*}

Now, assume $X_{0}\in\Gamma_{\delta^{\ast}}^{c}$ for some $\delta^{\ast}>0$.
We have, for all $\delta\in(0,\delta^{\ast})$,%
\begin{align*}
\mu_{\varphi_{\varepsilon}(X_{0})}(N) &  =P(\varphi_{\varepsilon}(X_{0})\in
N)\\
&  =P(\varphi_{\varepsilon}(X_{0})\in N,\tau_{X_{0}}^{\delta}>\varepsilon
)+P(\varphi_{\varepsilon}(X_{0})\in N,\tau_{X_{0}}^{\delta}\leq\varepsilon)\\
&  \leq P(\varphi_{\varepsilon}^{\delta}(X_{0})\in N,\tau_{X_{0}}^{\delta
}>\varepsilon)+P(\tau_{X_{0}}^{\delta}\leq\varepsilon)\\
&  \leq P(\varphi_{\varepsilon}^{\delta}(X_{0})\in N)+P(\tau_{X_{0}}^{\delta
}\leq\varepsilon)\\
&  =P(\tau_{X_{0}}^{\delta}\leq\varepsilon).
\end{align*}
To say that $P(\varphi_{\varepsilon}^{\delta}(X_{0})\in N)=0$ we have used two
facts: $N$ is Lebesgue-negligible, the law of $\varphi_{t}^{\delta}(X_{0})$ on
${\mathbb{T}}^{n}$ is absolutely continuous with respect to Lebesgue measure,
for each $X_{0}\in\Gamma^{c}$, $\delta>0$, $t>0$. The latter property is a
consequence of the second main assumption of section \ref{section assumptions}%
. See the appendix~\ref{sec:remarks} for details; we apply, in particular,
Theorem~\ref{Ito}.

Just by continuity of trajectories, we have $\lim_{\varepsilon\rightarrow
0}P(\tau_{X_{0}}^{\delta}\leq\varepsilon)=0$. Hence%
\[
\lim_{\varepsilon\rightarrow0}P(\varphi_{\lbrack\varepsilon,T]}(X_{0}%
)\in\Gamma^{c})=1.
\]
The family of events $(\varphi_{\lbrack\frac{1}{n},T]}(X_{0})\in\Gamma^{c})$
is decreasing in $n$, hence $P(\varphi_{\lbrack\frac{1}{n},T]}(X_{0})\in
\Gamma^{c})$ is also decreasing. This implies $P(\varphi_{\lbrack
\varepsilon,T]}(X_{0})\in\Gamma^{c})=1$ for every $\varepsilon$ giving
$P(\varphi_{\lbrack0,T]}(X_{0})\in\Gamma^{c})=1$.
\end{proof}

\subsection{Variations on the result  }

Let us complete this section with a variant of the previous result.
Next section is devoted to the proof that the assumptions of Section
\ref{section assumptions} are generic. But in fact we prove more,
namely that generically it happens that the vector fields
$A_{1},...,A_{N}$ themselves span $\mathbb{R}^{2n}$ at every point
$x\in\Gamma^{c}$ is $\mathbb{R}^{2n}$ (no Lie brackets are needed).
It is thus meaningful to investigate the problem under the following
assumption: $\left\{  \sigma_{k}\right\}  _{k=1,...,N}$ satisfies:\

\begin{hypothesis}
\label{r 2}
\parbox{0pt}{}
\begin{enumerate}
\item $\sigma_{k}$ are periodic, $C^2$ and
$\mathrm{div}\sigma_{k}=0$

\item the vector space spanned by the vector fields $A_{1},...,A_{N}$ at every
point $x\in\Gamma^{c}$ is $\mathbb{R}^{2n}$.
\end{enumerate}
\end{hypothesis}

Item 2 of this assumption is more restrictive than the corresponding
one of Hypothesis~\ref{r}, but smoothness of the fields is no more
needed. Under
Hypothesis~\ref{r 2}, we still have that the law of $\varphi_{t}^{\delta}%
(X_{0})$ on ${\mathbb{T}}^{n}$ (see the notations of the previous sections) is
absolutely continuous with respect to Lebesgue measure, for each $X_{0}%
\in\Gamma^{c}$, $\delta>0$, $t>0$; we use now Corollary~\ref{Corollary 2 Appendix}. 
Let us also remark that the proof of absolute
continuity of the law under this assumption is more elementary than under
Hypothesis~\ref{r}. For all these reasons it is worth to state also the
following variant of Theorem \ref{main theorem} (the proof is the same, based
on the previous remark on the absolute continuity).

\begin{theorem}
Under Hypothesis~\ref{r 2}, for all $X_{0}=(x_{0}^{1},...,x_{0}^{n}%
)\in{\mathbb{T}}^{n}\backslash\Gamma$ equation (\ref{SDE}) has one and only
one global strong solution.
\end{theorem}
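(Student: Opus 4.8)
The plan is to run the argument of Theorem~\ref{main theorem} essentially verbatim, isolating the single point at which the bracket-generating part of Hypothesis~\ref{r} was used and replacing it by the density statement available under Hypothesis~\ref{r 2}. First I would observe that the entire scaffolding built in the previous subsections --- the measure-conservation Lemma~\ref{conservation}, the It\^o-formula estimate on $g^\delta$ together with its two corollaries, and the almost-everywhere well-posedness Theorem~\ref{teo 1} --- never invokes the Lie-bracket condition. Those steps use only the divergence-free property of the $\sigma_k$ (item~1 of both hypotheses), the logarithmic and derivative bounds on $G^\delta$, and enough regularity of the fields to generate a stochastic flow $\varphi_t^\delta$ whose Jacobian equals one. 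I would check that $C^2$ regularity suffices here: the Stratonovich--It\^o correction $\frac12\sum_k\sigma_k\cdot\nabla\sigma_k$ is then $C^1$ and the diffusion coefficients are globally Lipschitz on the torus, so equation (\ref{delta SDE}) still generates a measure-preserving stochastic flow to which the divergence computation preceding Lemma~\ref{conservation} applies. The coalescence bounds on $I_{1a},I_{1b},I_3,I_4$ use only first derivatives of $\sigma_k$ and the kernel estimates, while the BDG bound on $I_2$ uses only boundedness of $\sigma_k$; all of this is unchanged for $C^2$ fields.

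The one place where Hypothesis~\ref{r} genuinely entered the proof of Theorem~\ref{main theorem} is in the final subsection, where absolute continuity of the law of $\varphi_t^\delta(X_0)$ with respect to Lebesgue measure (for $X_0\in\Gamma^c$, $\delta>0$, $t>0$) was deduced from the H\"ormander-type condition via Theorem~\ref{Ito}. Under Hypothesis~\ref{r 2} the fields $A_1,\dots,A_N$ already span $\mathbb{R}^{2n}$ at every point of $\Gamma^c$, so the diffusion is genuinely non-degenerate there and the density of the fixed-time marginal follows directly from Corollary~\ref{Corollary 2 Appendix}, which requires only $C^2$ coefficients and no iterated brackets. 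I would substitute this corollary for Theorem~\ref{Ito} in the single step
\[
\mu_{\varphi_\varepsilon(X_0)}(N)\le P(\varphi_\varepsilon^\delta(X_0)\in N)+P(\tau_{X_0}^\delta\le\varepsilon)=P(\tau_{X_0}^\delta\le\varepsilon),
\]
where $N$ is the Lebesgue-negligible bad set produced by Theorem~\ref{teo 1}; absolute continuity forces $P(\varphi_\varepsilon^\delta(X_0)\in N)=0$ exactly as before. The remaining Markov-property and $\varepsilon\to0$ limiting argument is then formally identical, since local existence and uniqueness on $[0,\tau_{X_0})$ needs only that drift and diffusion be locally Lipschitz on $\Gamma^c$, which holds for $C^2$ fields.

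The main obstacle, and the only thing genuinely to verify rather than copy, is that the lowered regularity is uniformly adequate across the two distinct ingredients that combine in the argument: the measure-preserving flow apparatus of Lemma~\ref{conservation} with its coalescence corollaries on one side, and the density result on the other. I expect the density side to be the more delicate one in general, but under Hypothesis~\ref{r 2} it is precisely the easy non-degenerate case handled by Corollary~\ref{Corollary 2 Appendix}; the flow side reduces to confirming that $C^2$ vector fields yield a Lebesgue-measure-preserving stochastic flow, which I would treat as standard (via Kunita's theory, with a routine regularization argument if one wishes to avoid borderline smoothness issues). Once both are in place, the conclusion $P(\varphi_{[0,T]}(X_0)\in\Gamma^c)=1$ for every $X_0\in\mathbb{T}^n\backslash\Gamma$ follows word for word from the proof of Theorem~\ref{main theorem}.
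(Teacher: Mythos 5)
Your proposal is correct and coincides with the paper's own argument: the paper proves this variant by declaring the proof of Theorem~\ref{main theorem} unchanged except for replacing the H\"ormander-based density result by Corollary~\ref{Corollary 2 Appendix}, which is exactly the substitution you make. Your additional check that $C^{2}$ regularity suffices for the flow, the measure conservation of Lemma~\ref{conservation}, and the coalescence estimates is a correct elaboration of what the paper leaves implicit.
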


\section{Generic $n$-point motions are hypoelliptic}
\label{sec:generic}

In this section we are going to provide a self-contained proof of the following statement which stipulates that $n$-point motions satisfying our assumptions are \emph{generic}.

\begin{theorem}
\label{th:npoint-genericity}
For all $M > 2 n$ there exists a residual set $Q \subset(C^{\infty})^{2 n M}$
such that  for every $(f_{a, i})_{a = 1, \ldots, M, i = 1, \ldots, 2 n} \in Q$
we have  $\operatorname{span}  \{A_{f_{a, i}} (x)\}_{a = 1, \ldots, M, i = 1,
\ldots, 2 n}  =\mathbb{R}^{2 n}$ for every $x \in\Gamma^{c}$.
\end{theorem}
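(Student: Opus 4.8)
The plan is to reduce the global, pointwise spanning requirement to a finite determinantal condition and then to make its zero set empty by a transversality and dimension count, with the surplus of functions ($M>2n$) providing exactly the codimension needed. First I would group the $2nM$ functions into $M$ batches $(f_{a,1},\dots,f_{a,2n})$, $a=1,\dots,M$. For each $a$ and each $x\in\Gamma^{c}$ I assemble the $2n\times 2n$ matrix $B_{a}(x)$ whose columns are $A_{f_{a,1}}(x),\dots,A_{f_{a,2n}}(x)$, and set $D_{a}(x)=\det B_{a}(x)$. If $D_{a}(x)\neq 0$ for some $a$, then the $2n$ columns of $B_{a}(x)$ already span $\mathbb{R}^{2n}$, hence so does the full family $\{A_{f_{a,i}}(x)\}_{a,i}$. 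It therefore suffices to produce a residual set $Q$ for which, for every $x\in\Gamma^{c}$, at least one $D_{a}(x)\neq 0$, i.e. $\bigcap_{a=1}^{M}\{D_{a}=0\}=\emptyset$ in the $2n$-dimensional manifold $\Gamma^{c}$.

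Next, fixing a batch $a$, I would show that for $(f_{a,i})_{i}$ in a residual set the function $D_{a}$ has $0$ as a regular value on $\Gamma^{c}$, and more precisely that the whole collection $D_{1},\dots,D_{M}$ can be made mutually transverse. The key structural input is the diagonal form (\ref{dopo}): the $l$-th $\mathbb{R}^{2}$-block of each $A_{f_{a,i}}(x)$ depends on $f_{a,i}$ only through its $1$-jet at the single point $x^{l}$. Since $x\in\Gamma^{c}$ means that $x^{1},\dots,x^{n}$ are pairwise distinct, I can, using bump functions supported in disjoint neighbourhoods of the $x^{l}$, prescribe the values and first derivatives of the vectors $A_{f_{a,i}}(x)$ completely independently. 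This yields that the evaluation map $(f_{a,i})_{a,i}\mapsto(D_{a}(x),d_{x}D_{a})_{a}$ is a submersion at every $x\in\Gamma^{c}$. This is precisely the place where being off the diagonal is indispensable: on $\Gamma$ two blocks coincide for \emph{every} choice of field, and independent control is irretrievably lost.

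With submersivity in hand, the parametric transversality theorem (Thom) provides a residual set of parameters $(f_{a,i})$ for which the map $x\mapsto(D_{1}(x),\dots,D_{M}(x))$ from $\Gamma^{c}$ into $\mathbb{R}^{M}$ is transverse to $\{0\}\subset\mathbb{R}^{M}$. Transversality to $\{0\}$ of a map out of a $2n$-manifold into $\mathbb{R}^{M}$ with $M>2n$ forces the preimage to be empty, i.e. $\bigcap_{a}\{D_{a}=0\}=\emptyset$, which is exactly the sufficient spanning condition isolated above. To handle the infinite-dimensional parameter space $(C^{\infty})^{2nM}$ with the Whitney topology rigorously, I would either invoke the Sard--Smale theorem after verifying the relevant map is Fredholm, or --- more in the elementary spirit announced in the text --- exhaust $\Gamma^{c}$ by countably many compact pieces, observe that the set of parameters achieving the (open) spanning condition on each compact piece is open and dense (density coming from the bump-function construction of the previous paragraph), and conclude by Baire that the countable intersection $Q$ is residual.

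The main obstacle is the submersivity verification of the second step together with its infinite-dimensional packaging: one must check not merely that the values $A_{f_{a,i}}(x)$ are freely prescribable, but that the $1$-jets in $x$ of the determinants $D_{a}$ are simultaneously controllable, so that genuine transversality (rather than mere pointwise nondegeneracy) is achieved, and then upgrade pointwise transversality to a residual statement uniformly over the \emph{non-compact} manifold $\Gamma^{c}$. The distinctness of the coordinates $x^{l}$ off $\Gamma$ is what makes the required local jet-prescription possible; once that submersion is established, the remainder is bookkeeping with the transversality theorem, the codimension inequality $M>2n$, and Baire category.
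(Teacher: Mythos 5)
Your reduction (it suffices that some batch determinant $D_a(x)\neq 0$), the dimension count $M>2n$, and the final compact-exhaustion/Baire packaging all parallel the paper. But the central step fails: $0$ is \emph{not} a regular value of the determinant, and your claimed submersivity is false at exactly the points that matter. If a batch matrix $B_a(x)$ has rank $\le 2n-2$, then any first-order perturbation of the fields changes $D_a(x)=\det B_a(x)$ by a sum of determinants in which one column is replaced while the remaining $2n-1$ columns still lie in a subspace of dimension $\le 2n-2$; every such determinant vanishes, so the derivative of $D_a(x)$ in \emph{all} parameter directions is zero, and the same computation gives $d_xD_a=0$. Hence at any pair (parameters, configuration) in the zero set where some batch has corank $\ge 2$ (such pairs certainly exist, e.g.\ a batch of identically vanishing fields), the $a$-th component of the combined derivative of $(F,x)\mapsto (D_1(x),\ldots,D_M(x))$ vanishes identically and the derivative cannot be surjective onto $\mathbb{R}^M$. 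So the hypothesis of the parametric transversality theorem (or of Sard--Smale) is simply not satisfied by the determinant map, and no bump-function freedom on the matrix \emph{entries} can repair this: the obstruction sits in the chain rule through $\det$ at its critical points. Note also that for $n\ge 2$ you cannot hope to generically avoid corank-$2$ points of a single batch, since that stratum has codimension $4\le 2n$ in matrix space.

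The paper's proof is built precisely to dodge this. Instead of the determinant, degeneracy of batch $a$ is encoded by the existence of a witness null vector: one adds auxiliary variables $u_a\in\mathbb{R}^{2n}$ and the equations $\sum_i u_{a,i}A_{f_{a,i}}(x)=0$, $|u_a|^2=1$ (the map $\Phi(F,x,U)$ of Section~\ref{sec:generic}). These equations are \emph{linear} in the fields, and at any zero some $u_{a,i}\neq 0$, so perturbing $f_{a,i}$ moves the $a$-th component in every direction $(A_h(x),0)$, $h\in\mathcal{F}_d$; together with the pointwise spanning lemma this makes $0$ a genuine regular value at \emph{every} zero, whatever the corank of the batch. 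The dimension count then reads $2n+2nM<(2n+1)M$, i.e.\ $M>2n$, and emptiness of the bad set follows from Theorem~\ref{th:transv}. You could alternatively salvage your determinant formulation by stratifying the degeneracy locus by corank (the corank-$r$ stratum has codimension $r^2$) and requiring transversality to each stratum, but that needs stratified-transversality machinery; the null-vector trick is the elementary fix, and is what the paper does.
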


The parametric Sard's theorem (or Thom's transversality theorem) are  general
tools which allow to prove generic properties of geometric objects (see, for
instance~\cite{GuPol}); here we intend properties valid for almost all objects
with respect to some natural measure, or valid in a residual set (countable
intersection of open dense sets). For some applications of transversality to control theory the reader could look at~\cite{KSD} where some interesting examples are worked out in a quite explicit setting.

Here we consider an easy version of the theorem which
we are going to use to show that for a sufficiently large but otherwise
generic family of vector fields on the torus, the associated $n$-point motion
generate, as a Lie algebra, the full tangent space in each point outside the diagonals.
The basic idea is simple,  unfortunately we haven't found a reference to an equivalent statement which do not require some background in differential topology to be understood, so
we provide here the easy proof for reader sake.

\begin{theorem}
\label{th:transv}  Let $\ell< n$ and let $X \subset\mathbb{R}^{\ell}$ and $Y
\subset\mathbb{R}^{m}$ be open sets.  Consider a $C^{1}$ function $F : X
\times Y \rightarrow\mathbb{R}^{n}$  and assume that $0$ is a \emph{regular
value} for $F$ (i.e. the Jacobian matrix $D F  (x, y)$ is surjective for all
$(x, y) \in F^{- 1} (0)$). Then the set  $\mathcal{X}_{y} =  \{x \in X : F (x,
y) = 0\}$ is empty for Lebesgue almost every  $y \in Y$.
\end{theorem}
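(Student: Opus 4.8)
The plan is to show that the projection onto the parameter space $Y$ of the zero set $Z := F^{-1}(0) \subset X \times Y$ is Lebesgue-negligible; once this is established, every $y$ outside this negligible set has empty fibre $\mathcal{X}_y = \{x \in X : (x,y) \in Z\}$, which is exactly the claim. The hypothesis $\ell < n$ enters precisely to make $Z$ of strictly lower dimension than $Y$, so that its image under projection cannot carry positive measure.

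First I would use the regularity of the value $0$ to parametrize $Z$ locally. Fix $(x_0, y_0) \in Z$. Since $DF(x_0, y_0) : \mathbb{R}^{\ell + m} \to \mathbb{R}^n$ is surjective, it has rank $n$, so some $n \times n$ minor of the Jacobian --- corresponding to a choice of $n$ of the $\ell + m$ coordinates of $(x,y)$ --- is invertible at $(x_0, y_0)$. By the implicit function theorem these $n$ coordinates are $C^1$ functions of the remaining $d := \ell + m - n$ coordinates on a neighbourhood of $(x_0, y_0)$ in $Z$; that is, near each of its points $Z$ is the image of a $C^1$ map $\psi$ defined on an open subset $V \subset \mathbb{R}^d$. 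Since $\mathbb{R}^{\ell + m}$ is second countable (Lindel\"{o}f), countably many such neighbourhoods cover $Z$, so $Z = \bigcup_{r} \psi_r(V_r)$ with each $\psi_r \in C^1(V_r; \mathbb{R}^{\ell+m})$ and $V_r \subset \mathbb{R}^d$ open.

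Composing with the projection $\pi(x,y) = y$, we see that $\pi(Z)$ is covered by the countably many images of the $C^1$ maps $\pi \circ \psi_r : V_r \to \mathbb{R}^m$. Here is where $\ell < n$ is used: $d = \ell + m - n < m$. The image of an open subset of $\mathbb{R}^d$ under a $C^1$ map into $\mathbb{R}^m$ with $d < m$ has Lebesgue measure zero in $\mathbb{R}^m$ --- indeed, padding the domain to $V_r \times \mathbb{R}^{m-d}$ and letting the map ignore the extra variables produces a $C^1$, hence locally Lipschitz, self-map of an open subset of $\mathbb{R}^m$ that carries the null set $V_r \times \{0\}$ onto $\pi\circ\psi_r(V_r)$, and locally Lipschitz maps send null sets to null sets. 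Therefore $\pi(Z) = \bigcup_r \pi \circ \psi_r(V_r)$ is a countable union of null sets, hence Lebesgue-negligible in $Y$.

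Finally, for every $y \in Y \setminus \pi(Z)$ --- that is, for Lebesgue-almost every $y \in Y$ --- there is no $x \in X$ with $(x,y) \in Z$, so $\mathcal{X}_y = \emptyset$, which is precisely the assertion. The only ingredient beyond routine calculus is the elementary measure-theoretic lemma that a $C^1$ map defined on an open subset of a strictly lower-dimensional Euclidean space has image of Lebesgue measure zero; this is what replaces an appeal to Sard's theorem and keeps the argument self-contained. I expect the main, though standard, technical points to be the bookkeeping of the local $C^1$ parametrizations of $Z$ and the verification that $C^1$ maps are locally Lipschitz, so that they carry null sets to null sets.
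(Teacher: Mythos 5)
Your proof is correct and follows essentially the same route as the paper's: an implicit-function-theorem parametrization of $F^{-1}(0)$ over open subsets of $\mathbb{R}^{\ell+m-n}$, a countable cover by such charts, and the observation that the projection to $Y$ then has Lebesgue-null image because $\ell+m-n<m$. If anything, your write-up is more careful on two points the paper glosses over: you justify the null-image claim with an explicit padding/locally-Lipschitz argument rather than asserting that a set ``of dimension $m+\ell-n$'' is negligible, and you invoke Lindel\"of to obtain a countable subcover where the paper speaks of a finite subcover, which is not available since $F^{-1}(0)$ need not be compact.
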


\begin{proof}
 Consider a point $(x_0, y_0) \in F^{- 1} (0)$. By the implicit function theorem
 and the fact that $\tmop{dim} \tmop{Im} DF (x_0, y_0) = n$ there exists open sets
$U, V$ such that  $x_0 \in U \subset X$ and $y_0 \in V \subset Y$ and for which
the set $(U \times V) \cap F^{- 1} (0)$ is the graph of a $C^1$ function defined on the open set
$W \subset \mathbb{R}^{\ell + m - n}$. In particular there exists an differentiable
homeomorphism $\psi : W \rightarrow
 (U \times V) \cap F^{- 1} (0) \subset \mathbb{R}^{\ell} \times
 \mathbb{R}^m$. Let $\pi_2 : X \times Y \rightarrow
 Y$ be the canonical projection over the second factor and consider the differentiable map $\pi_1 \circ \psi : W \rightarrow
 \mathbb{R}^m$: the image $W' = \pi_1 \circ \psi (W) \subset V$ of $W$ is a set of dimension
 $m + \ell - n < m$ and then of zero measure with respect to the $m$-dimensional Lebesgue measure.
 From an covering of $F^{- 1}
 (0)$ by open sets of the form $U \times V$ we can then obtain a finite subcover and form the union of the associated $W'$s which we call
 $\tilde{Y} \subset \mathbb{R}^m$ and which is still a negligible set. Now
 $\tilde{Y}$ contains exactly the points $y \in Y$ such that there exists $x \in X$ for which
  $F (x, y) = 0$, so we conclude that $y \in Y \backslash \tilde{Y} \Rightarrow
 \mathcal{X}_y = \emptyset$.
\end{proof}

\medskip

For every $d \in\mathbb{N}$ define the finite-dimensional real vector space
$\mathcal{F}_{d}$ of the solenoidal  vector fields $f : \mathbb{T}
\rightarrow\mathbb{R}^{2}$ on the torus $\mathbb{T}$ of the form
$
f (x) = \sum_{|k_{1} |,|k_2| \leqslant d}  k^{\bot} e^{i
\langle k, x \rangle} \hat{f} (k)
$.
Fix $n \geqslant1$ and recall that $\Gamma=\{(x^{1}, \ldots, x^{n}) : \min_{i
\neq j} |x^{i} - x^{j} | = 0\} \subseteq\mathbb{T}^{n}$.  Let $D =
\dim(\mathcal{F}_{d}) = (2d + 1)^{2}$.
According to \eqref{dopo}, for every vector field $f : \mathbb{T}
\rightarrow\mathbb{R}^{2}$ on $\mathbb{T}$ define $A_{f}$ as the vector field
on $\mathbb{T}^{n}$ given by $A_{f} (x) = (f (x^{1}), \ldots, f (x^{n}))$.

To understand how to use Theorem~\ref{th:transv}  to prove
genericity results for vector fields let us give a simple result
which helps in understanding the main argument.

\begin{lemma}
Let $d \in\mathbb{N}$. Fix a point $x \in\mathbb{T}^{2 n}$ and assume that
there exist vector fields $h_{1}, \ldots, h_{2 n} \in\mathcal{F}_{d}$ such
that the family $\{A_{h_{i}}  (x)\}_{i = 1, \ldots, 2 n}$ span all
$\mathbb{R}^{2 n}$. Then the same is true for Lebesgue almost every vector
fields $\sigma_{1}, \ldots, \sigma_{2 n} \in\mathcal{F}_{d}$ (i.e., we have
that  $\{A_{\sigma_{i}} (x)\}_{i = 1, \ldots, 2 n}$ spans all $\mathbb{R}^{2}$,
for  a.e. $\sigma_{1}, \ldots, \sigma_{2 n} \in\mathcal{F}_{d}$).
\end{lemma}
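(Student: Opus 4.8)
The plan is to deduce this lemma directly from Theorem~\ref{th:transv} by constructing an appropriate map $F$ whose regular-value preimage captures exactly the degenerate configurations of vector fields. The key observation is that the condition ``$\{A_{\sigma_i}(x)\}_{i=1,\ldots,2n}$ fails to span $\mathbb{R}^{2n}$'' is equivalent to the vanishing of the determinant of the $2n \times 2n$ matrix whose $i$-th column is $A_{\sigma_i}(x) = (\sigma_i(x^1),\ldots,\sigma_i(x^n)) \in \mathbb{R}^{2n}$. Since each $\sigma_i \in \mathcal{F}_d$ is parametrized by its finitely many Fourier coefficients, the space $\mathcal{F}_d^{2n}$ is a finite-dimensional vector space, say $\mathbb{R}^{2nD}$ with $D = \dim(\mathcal{F}_d)$, and the determinant is a smooth (indeed polynomial, hence analytic) function of these parameters.

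\medskip

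First I would set up the transversality map. Fix the point $x \in \mathbb{T}^{2n}$ (note: since $x$ is fixed, the lemma is a pointwise statement and no diagonal exclusion is needed here). Define
\[
F : \mathbb{R} \times \mathcal{F}_d^{2n} \longrightarrow \mathbb{R}, \qquad F(t,\sigma_1,\ldots,\sigma_{2n}) = t \cdot \det M(\sigma_1,\ldots,\sigma_{2n}),
\]
where $M$ is the matrix with columns $A_{\sigma_i}(x)$. Here the auxiliary scalar variable $t$ plays the role of $X \subset \mathbb{R}^\ell$ with $\ell = 1$, and $Y = \mathcal{F}_d^{2n} \cong \mathbb{R}^{m}$ with $m = 2nD$; the target is $\mathbb{R}^n$ with $n=1$, so the hypothesis $\ell < n$ of Theorem~\ref{th:transv} fails. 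To respect that hypothesis I would instead take the simpler route below, where $F$ maps into $\mathbb{R}$ with $\ell=1$ and we need $\ell < n$, i.e.\ we should arrange the target dimension to exceed one — so in fact the cleanest formulation is to apply Theorem~\ref{th:transv} directly with the \emph{determinant itself} as $F$.

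\medskip

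Concretely, I would argue as follows. Consider $G : \mathcal{F}_d^{2n} \to \mathbb{R}$ given by $G(\sigma_1,\ldots,\sigma_{2n}) = \det M(\sigma_1,\ldots,\sigma_{2n})$. By hypothesis there exist $h_1,\ldots,h_{2n}$ with $G(h_1,\ldots,h_{2n}) \neq 0$, so $G$ is a nonzero polynomial in the Fourier coefficients; therefore its zero set $\{G = 0\}$ is a proper algebraic subvariety of $\mathbb{R}^{2nD}$ and hence has Lebesgue measure zero. This is the whole content: the set of $(\sigma_1,\ldots,\sigma_{2n})$ for which $\{A_{\sigma_i}(x)\}$ fails to span is precisely $\{G = 0\}$, which is negligible. \textbf{The main obstacle} — and the reason the paper routes this through transversality rather than the elementary ``a nonzero polynomial vanishes on a null set'' argument — is presumably to set up the uniform machinery needed in the subsequent, harder genericity theorem (Theorem~\ref{th:npoint-genericity}), where one must handle \emph{all} $x \in \Gamma^c$ simultaneously and the target dimension genuinely matters. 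For this standalone lemma, however, I expect the polynomial/measure-zero argument to suffice, and the only point requiring care is verifying that $\det M$ depends polynomially (hence real-analytically) on the finitely many Fourier parameters, which is immediate since each entry $\sigma_i^\alpha(x^j) = \sum_{|k_1|,|k_2|\leq d} (k^\perp)^\alpha e^{i\langle k, x^j\rangle}\hat{\sigma}_i(k)$ is linear in the coefficients $\hat\sigma_i(k)$ at the fixed points $x^j$.
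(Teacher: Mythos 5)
Your final argument is correct, and it is genuinely different from the paper's. The paper proves this lemma as a deliberate warm-up application of Theorem~\ref{th:transv}: it introduces an auxiliary vector $u\in\mathbb{R}^{2n}$ constrained to the unit sphere and the map $\Psi(\sigma_{1},\ldots,\sigma_{2n},u)=\bigl(\sum_{i}u_{i}A_{\sigma_{i}}(x),\,|u|^{2}-1\bigr)$, verifies that $(0,0)$ is a regular value (the derivatives in the Fourier directions give all vectors $(A_{h}(x),0)$ with $h\in\mathcal{F}_{d}$ because some $u_{i}\neq 0$, and together with the $u$-derivatives these span $\mathbb{R}^{2n}\times\mathbb{R}$ precisely by the assumed existence of the $h_{i}$), and then invokes the transversality theorem to conclude that the bad set of $\sigma$'s is Lebesgue-null. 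Your route replaces all of this by the observation that failure to span is the vanishing of $\det M$, a polynomial in the $2nD$ real Fourier parameters that is not identically zero by hypothesis, whose zero set is therefore null; this is shorter and more elementary, and it works here exactly because the number of fields ($2n$) equals the ambient dimension so that spanning is a single determinant condition. Your diagnosis of why the paper nevertheless routes through transversality is accurate and worth making explicit: in Theorem~\ref{th:npoint-genericity} the point $x$ must range over all of $\Gamma^{c}$, and a polynomial-plus-Fubini argument would only yield ``for a.e.\ $F$, for a.e.\ $x$,'' whereas Theorem~\ref{th:transv} yields \emph{emptiness} of the bad slice $\mathcal{X}_{y}$, which upgrades to ``for all $x$'' once $x$ is included among the variables of the map; the paper itself presents the lemma only as an illustration of that machinery. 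Your abandoned first construction $F(t,\sigma)=t\det M$ was rightly discarded and does not affect the validity of the final argument.
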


\begin{proof}
Consider the map $\Psi:  \mathcal{F}_{d}^{2 n} \times
\mathbb{R}^{2n} \rightarrow\mathbb{R}^{2 n}\times \mathbb{R}$
\[
\Psi(\sigma_{1}, \ldots, \sigma_{2 n}, u) = (u_{1} A_{\sigma_{1}} (x)
+ \cdots+ u_{2 n} A_{\sigma_{2 n}} (x) , \sum_{i=1}^{2n} u_i^2 -1).
\]
If we show that $\operatorname{rank}  (D \Psi(\sigma_{1}, \ldots, \sigma_{2
n}, u)) = 2  n+1$ for every $(\sigma_{1}, \ldots, \sigma_{2 n}, u)\in \Psi^{-1}(0,0)$  then we have
that the set  of vector fields $\sigma_{1}, \ldots, \sigma_{2 n}$ such that
$\Psi(\sigma_{1}, \ldots,  \sigma_{2 n}, u) = 0$ for some $u \in
\mathbb{R}^{2n}$ such that  $|u|=1$ is of zero Lebesgue measure  which allows us to conclude that
$\{A_{\sigma_{i}} (x)\}_{i = 1, \ldots, 2 n}$ span all
$\mathbb{R}^{2 n}$ for almost every choice  $\sigma$ in
$\mathcal{F}_{d}^{2n}$. Let us then compute $D \Psi$.  Taking
$q = (q_1, q_2)
\in\mathbb{Z}^2$, $ |q_1| \le d,\; |q_2| \le d$,
we have
\[
D_{\widehat{\sigma_{}}_{i} (q)} \Psi(\sigma_{1}, \ldots, \sigma_{2 n}, u) = (
u_{i} D_{\widehat{\sigma_{}}_{i} (q)} A_{\sigma_{i}} (x),0) = (u_{i}   ( q^{\bot} e^{i \langle q, x^{1} \rangle}, \ldots,q^{\bot} e^{i \langle q, x^{n} \rangle}), 0)
\]
and
\[
D_{u_{i}} \Psi(\sigma_{1}, \ldots, \sigma_{2 n}, u) = (A_{\sigma_i}(x),2 u_i)
\]
Since $|u|^2=1$ at least one of the components $u_i \neq 0$ so that the span of these vectors contains all the elements of the form
$
(h (x^{1}), \ldots, h (x^{n})) =  (A_{h} (x),\rho)
$
for arbitrary $h \in\mathcal{F}_{d}$ and $\rho\in\mathbb{R}$.
But by assumption these vectors span $\mathbb{R}^{2n}\times\mathbb{R}$ so we can conclude
using Theorem~\ref{th:transv}.
\end{proof}

\medskip

Let us now return to our main aim: build families of vector fields spanning
$\mathbb{R}^{2n}$ in each point of $\Gamma^{c}$.
The neighborhoods of the diagonals $\Gamma$ are source of troubles so for the
moment let us restrict to the consideration of $n$-point configurations
belonging to an open set $G\subset\mathbb{T}^{2n}$ away from them.

Consider the map $\Phi: \mathcal{F}_{d}^{2 n M} \times G \times
(\mathbb{R}^{2n})^{M} \rightarrow(\mathbb{R}^{2 n}\times \mathbb{R})^{M}$ given by
\[
\Phi(F, x,U)  = ( (\sum_{i = 1}^{2 n} u_{1, i} A_{f_{1, i}} (x),|u_1|^2-1), \cdots, (\sum_{i =
1}^{2 n} u_{M, i} A_{f_{M, i}} (x),|u_M|^2-1)) .
\]
where $F = (f_{1, 1}, \ldots f_{1, 2 n}, \ldots, f_{M, 2 n}) \in\mathcal{F}%
_{d}^{2 n M}$ and $U = (u_{1}, \ldots, u_{M}) \in(
{\mathbb{R}^{2n}_0})^{M}$. Then
\[
D \Phi(F, x, U) : \mathbb{R}^{2 nMD} \times\mathbb{R}^{2 n} \times
\mathbb{R}^{2 n M} \rightarrow(\mathbb{R}^{2 n}\times\mathbb{R})^{M}
\]
The various components
of the Jacobian matrix   are given by (we denote by $\mathbb{I}_{a =
b}$
the indicator
function)
\[
(D_{u_{a, i}} \Phi(F, x, U))_{b} =\mathbb{I}_{a = b} (A_{f_{a, i}}
(x),2 u_{a,i}),
\]
\[
(D_{\hat{f}_{a, i} (q)} \Phi(F, x, U))_{b} =\mathbb{I}_{a = b} (u_{a, i}
D_{\hat{f}_{a, i} (q)} (f_{a, i} (x^{1}), \ldots, f_{a, i} (x^{n})),0)
\]
\[
=\mathbb{I}_{a = b} (u_{a, i} (q^{\bot} e^{i \langle q_{}, x^{1} \rangle}, \ldots,
q^{\bot} e^{i \langle q_{}, x^{n} \rangle}),0)
\]
\[
(D_{x^{i}} \Phi(F, x, U))_{a} =  \sum_{j = 1}^{2 n} (u_{a,
j}  (f_{a, j} (x^{1}), \ldots, D_{x^{i}} f_{a, j} (x^{i}), \ldots, f_{a, j}
(x^{n})),0)
\]
where $a, b = 1, \ldots, M$. The image of $D \Phi(F, x, U)$ contains then
vectors $v$ of the form
\[
v_{b} = \sum_{ a,i,q} \lambda_{a,i, q} (D_{\hat{f}_{a, i} (q)} \Phi(F, x, U))_{b}
=(\sum_{i} u_{b, i} (g_{b, i} (x^{1}), \ldots, g_{b, i} (x^{n})),0)
\]
with $a, b = 1, \ldots, M$, with arbitrary coefficients $\lambda_{a,
i, q}$ and where $g_{a, i} (x) = \sum_{q} \lambda_{a, i, q} q^{\bot}
e^{i \langle q_{}, x^{} \rangle} $ are  arbitrary vectors in
$\mathcal{F}^{d}$. Now note that for any $a = 1, \ldots, M$ the constraint $|u_{a}|^2=1$
imply that there exists $i = 1, \ldots, 2 n$ such that $u_{a, i}
\neq0$. This allows to conclude that in the image of $D \Phi(F, x,
U)$ belong all the vectors  $((A_{h_{1}} (x),\rho_1), \ldots,
(A_{h_{M}} (x),\rho_M))$ for an arbitrary family $\{h_{a}
\in\mathcal{F}_{d}\}_{a = 1, \ldots, M}$ and $\rho_1,..,\rho_M\in\mathbb{R}$. Now we use the assumption
that for any $x \in G$ we have vector fields $\sigma_{1}, \ldots,
\sigma_{2 n}$ such that $\{A_{\sigma_{i}} (x)\}_{i = 1, \ldots, 2
n}$ span all $\mathbb{R}^{2 n}$. This is enough to conclude that for
every $(F, x, U)$ we have $\operatorname{Im}  (D \Phi(F, x, U)) =
(\mathbb{R}^{2 n}\times\mathbb{R})^{M}$.

\medskip Now, by using Theorem~\ref{th:transv} we deduce that for
Lebesgue-almost every $F\in \mathcal{F}_{d}^{2 n M}$ the set of
configurations $x \in G$ and auxiliary vectors $U
\in({\mathbb{R}^{2n}})^{M}$ such that $\Phi(F, x, U) = 0$ is
empty. This in turn implies that for almost every realization of
Fourier coefficients the $2 n M$ vector fields $\{A_{f_{a, i}} \}_{a
= 1, \ldots, M, i = 1, \ldots, 2 n}$ span $\mathbb{R}^{2 n}$ in each
point $x \in G$ since for every $x\in G$ at least one of the
combinations
$
\sum_{i = 1}^{2 n} u_{1, i} A_{f_{1, i}} (x), \ldots, \sum_{i = 1}^{2 n}
u_{M, i} A_{f_{M, i}} (x)
$
is always different from zero for any possible choice of $u_{a, i}$. We just
proved that

\begin{theorem}
Assume that for every $x \in G$ there exists vector-fields $\sigma_{1},
\ldots,  \sigma_{2 n} \in\mathcal{F}_{d}$ such that $\operatorname{Span}
\{A_{\sigma_{i}} (x)\}_{i = 1, \ldots, 2 n}=\mathbb{R}^{2 n}$. Then for any $M
2 n$ and for Lebesgue almost every  realization of $2 n M$ vector fields
$\{f_{a, i} \in\mathbb{F}_{d} \}_{a = 1, \ldots, M, i = 1, \ldots, 2 n}$, the
family $\{A_{f_{a, i}} \}_{a = 1, \ldots, M, i = 1, \ldots, 2 n}$ spans
$\mathbb{R}^{2 n}$ in all the points $x \in G$.
\end{theorem}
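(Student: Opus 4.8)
The plan is to encode the failure of the spanning property as the vanishing of an auxiliary map and then apply the transversality theorem (Theorem~\ref{th:transv}), treating the vector fields as the parameter to be averaged over. Concretely, I would work with the map $\Phi : \mathcal{F}_d^{2nM} \times G \times (\mathbb{R}^{2n})^M \to (\mathbb{R}^{2n} \times \mathbb{R})^M$ defined block by block, whose $a$-th block sends $(F, x, U)$ to $(\sum_{i=1}^{2n} u_{a,i} A_{f_{a,i}}(x), |u_a|^2 - 1)$. The role of the extra scalar equation $|u_a|^2 - 1 = 0$ is that on $\Phi^{-1}(0)$ every $u_a$ is a unit vector, so each block carries at least one nonzero coefficient $u_{a,i}$; moreover a point of $\Phi^{-1}(0)$ records a configuration $x$ at which, simultaneously in every block $a$, the $2n$ vectors $\{A_{f_{a,i}}(x)\}_i$ are linearly dependent.

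The heart of the argument --- and the step I expect to be most delicate --- is to verify that $0$ is a regular value of $\Phi$, i.e. that $D\Phi(F,x,U)$ is surjective at every point of $\Phi^{-1}(0)$. I would establish this block by block. Within block $a$, fix an index $i$ with $u_{a,i} \neq 0$ (possible since $|u_a| = 1$). Differentiating in the Fourier coefficients $\hat{f}_{a,i}(q)$ produces the vectors $(u_{a,i} A_h(x), 0)$ for arbitrary $h \in \mathcal{F}_d$; since $A_h(x)$ is linear in $h$ and, by the standing assumption, already spans $\mathbb{R}^{2n}$ as $h$ ranges over $\mathcal{F}_d$, these derivatives fill the slice $\mathbb{R}^{2n} \times \{0\}$ of block $a$. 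Differentiating in $u_{a,i}$ gives $(A_{f_{a,i}}(x), 2u_{a,i})$ with $2u_{a,i} \neq 0$, supplying the missing last coordinate; correcting its $\mathbb{R}^{2n}$-part with the Fourier derivatives shows the image of block $a$ is all of $\mathbb{R}^{2n} \times \mathbb{R}$. Since distinct blocks involve disjoint Fourier coefficients and auxiliary variables, these block-local variations alone make $D\Phi$ surjective onto $(\mathbb{R}^{2n} \times \mathbb{R})^M$.

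With regularity in hand I would apply Theorem~\ref{th:transv} with the fields $F \in \mathcal{F}_d^{2nM}$ in the role of the averaged parameter and $(x, U) \in G \times (\mathbb{R}^{2n})^M$ in the role of the variable. The dimension hypothesis of that theorem reads here $\dim(G \times (\mathbb{R}^{2n})^M) < \dim((\mathbb{R}^{2n} \times \mathbb{R})^M)$, i.e. $2n + 2nM < (2n+1)M$, which is exactly $M > 2n$; this is where the hypothesis on $M$ enters. The theorem then yields that for Lebesgue-almost every $F$ the set $\{(x,U) : \Phi(F,x,U) = 0\}$ is empty.

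Finally I would translate emptiness back into the spanning statement. Emptiness says precisely that for no $x \in G$ can one choose unit vectors $u_a$ making all $M$ block-combinations $\sum_i u_{a,i} A_{f_{a,i}}(x)$ vanish; equivalently, for every $x \in G$ at least one block $a$ has the $2n$ vectors $\{A_{f_{a,i}}(x)\}_{i=1,\ldots,2n}$ linearly independent. Being $2n$ independent vectors in $\mathbb{R}^{2n}$, they span, and hence a fortiori the full family $\{A_{f_{a,i}}(x)\}_{a,i}$ spans $\mathbb{R}^{2n}$ at that point. Thus for almost every realization of the $2nM$ fields the spanning holds at every $x \in G$, which is the assertion.
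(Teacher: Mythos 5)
Your proposal is correct and follows essentially the same route as the paper: the same auxiliary map $\Phi$ with the unit-norm constraints $|u_a|^2-1$, the same block-by-block verification that $0$ is a regular value (Fourier-coefficient derivatives filling $\mathbb{R}^{2n}\times\{0\}$ via the standing assumption, the $u$-derivative supplying the last coordinate), and the same application of Theorem~\ref{th:transv} followed by the observation that emptiness of $\Phi^{-1}(0)$ forces one block to be linearly independent at each $x$. Your explicit dimension count $2n+2nM<(2n+1)M\iff M>2n$ is a welcome clarification of where the hypothesis on $M$ enters, which the paper leaves implicit.
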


Note that this theorem allows us to obtain a result valid in every point for a
generic set of vector fields form a construction of a set of vector fields
specific for each point, which is a lot easier to do.

For every $\delta> 0$ let us now define the open set $G_{\delta} =\{x
\in\mathbb{T}^{2 n} : \min_{i \neq j} |x^{i} - x^{j} | > \delta\}
\subset\mathbb{T}^{2 n}$ of points $\delta$-uniformly away from the diagonals.

A simple construction gives that for each $\delta>0$ there exist two smooth
divergence-free vector fields $g_{1}(x)$ and $g_{2}(x)$ with compact support
inside the ball $B(0,\delta/2)$ and such that $g_{1}(0)=(1,0)$ and
$g_{2}(0)=(0,1)$ (it is sufficient to use fields of the form $g\left(
x\right)  =\varphi( \vert x-x_{0}\vert ^{2}) (
x-x_{0})  ^{\perp}$ with suitable $x_{0}\in\mathbb{R}^{2}$ and smooth
scalar compact support function $\varphi$). In such a way, for any fixed point
$\hat{x}\in G_{\delta}$ we can obtain $2n$ vector fields $f_{1},\ldots,f_{2n}$
of the form
\[
f_{2k-1}(x)=g_{1}(x-\hat{x}_{k}),f_{2k}(x)=g_{2}(x-\hat{x}_{k}),k=1,\ldots,n
\]
such that $\{A_{f_{i}}(\hat{x})\}_{i=1,\ldots,2n}$ is the canonical basis of
$\mathbb{R}^{2n}$. A difficulty stems from the fact that these fields do not
necessarily belong to $\mathcal{F}_{d}$ for some $d$. We need then to
approximate the functions $g_{1}$ and $g_{2}$ by elements of $\mathcal{F}_{d}$
for $d$ large enough. Fix $\varepsilon>0$ small enough, by density of
trigonometric polynomials, there exists $d>0$ and $\widetilde{g_{1}%
},\widetilde{g_{2}}\in\mathcal{F}_{d}$ such that $\sup_{x\in\mathbb{T}^{2}%
}|g_{i}(x)-\tilde{g}_{i}(x)|<\varepsilon$. Note that the functions
\[
\tilde{f}_{2k-1}(x)=\tilde{g}_{1}(x-\hat{x}_{k}),\tilde{f}_{2k}(x)=\tilde
{g}_{2}(x-\hat{x}_{k}),k=1,\ldots,n
\]
belong to $\mathcal{F}_{d}$ for any $\hat{x}\in\mathbb{T}^{2}$ and that, for
example,
\[
|A_{\tilde{f}_{1}(\hat{x})}-(1,0,\ldots,0)|\leqslant C\varepsilon
\]
where the constant does not depend on the parameters of the problem. Then for
$\varepsilon$ small enough, the family $\{A_{\tilde{f}_{i}(\hat{x}%
)}\}_{i=1,\ldots,2n}$ spans all $\mathbb{R}^{2n}$. The value of $d$ depends
only on $\varepsilon$ and $\delta$ but not on $\hat{x}\in G_{\delta}$. This leads us to the following result.

\begin{lemma}
For each $\delta> 0$ there exists $d \geqslant1$ such that for every $x \in
G_{\delta}$ we can find $2 n$ vector fields $f_{1}, \ldots, f_{2 n}
\in\mathcal{F}_{d}$ with the property that $\operatorname{span}  \{A_{f_{i}
(x)} \}=\mathbb{R}^{2 n}$.
\end{lemma}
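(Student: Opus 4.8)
The plan is to reduce the \emph{global} spanning requirement on $G_{\delta}$ to a purely \emph{local} construction at a single configuration, and then to land the resulting fields inside $\mathcal{F}_{d}$ by a trigonometric approximation whose degree is uniform over $G_{\delta}$. Fix $\delta>0$ and a configuration $\hat{x}=(\hat{x}_{1},\ldots,\hat{x}_{n})\in G_{\delta}$, so that $|\hat{x}_{i}-\hat{x}_{j}|>\delta$ whenever $i\neq j$. The key observation is that $A_{f}(\hat{x})=(f(\hat{x}_{1}),\ldots,f(\hat{x}_{n}))$, so if $f$ is supported in a small ball around a single point $\hat{x}_{k}$, then $A_{f}(\hat{x})$ has at most its $k$-th $\mathbb{R}^{2}$-block nonzero. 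Exploiting the $\delta$-separation, I would arrange $2n$ fields, two localized at each $\hat{x}_{k}$, so that $\{A_{f_{i}}(\hat{x})\}_{i=1,\ldots,2n}$ is \emph{exactly} the canonical basis of $\mathbb{R}^{2n}$.

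First I would build two \emph{template} fields $g_{1},g_{2}$ on $\mathbb{T}$: smooth, divergence-free, supported in $B(0,\delta/2)$, with $g_{1}(0)=(1,0)$ and $g_{2}(0)=(0,1)$. Writing each as $g=\nabla^{\perp}\Psi$ for a compactly supported scalar stream function $\Psi$ automatically enforces $\mathrm{div}\,g=0$ and forces $\int_{\mathbb{T}}g\,dx=0$; concretely one takes $\Psi(x)=\Phi(|x-x_{0}|^{2})$, whose perpendicular gradient is radial of the form $\varphi(|x-x_{0}|^{2})(x-x_{0})^{\perp}$, and then chooses $x_{0}$ and the profile so that the value at the origin and the support are as prescribed. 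Translating, $f_{2k-1}(x)=g_{1}(x-\hat{x}_{k})$ and $f_{2k}(x)=g_{2}(x-\hat{x}_{k})$, the disjointness of the supports (radius $\delta/2$ versus separation $>\delta$) yields $A_{f_{2k-1}}(\hat{x})$ and $A_{f_{2k}}(\hat{x})$ equal to the two canonical basis vectors carried by the $k$-th block; over all $k$ these exhaust a basis of $\mathbb{R}^{2n}$.

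The remaining issue is that $g_{1},g_{2}$ are not trigonometric polynomials, hence lie in no $\mathcal{F}_{d}$. Since each $g_{i}$ is smooth, solenoidal and mean-zero, its Fourier series converges uniformly, and its truncation at frequencies $|k_{1}|,|k_{2}|\leq d$ is \emph{automatically} of the solenoidal form defining $\mathcal{F}_{d}$: the constraint $k\cdot\hat{g}_{i}(k)=0$ forces $\hat{g}_{i}(k)$ to be a scalar multiple of $k^{\perp}$ for each $k\neq0$, while $\hat{g}_{i}(0)=0$. Thus for any $\varepsilon>0$ there is $d$ and $\tilde{g}_{i}\in\mathcal{F}_{d}$ with $\sup_{x}|g_{i}(x)-\tilde{g}_{i}(x)|<\varepsilon$. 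Setting $\tilde{f}_{2k-1}(x)=\tilde{g}_{1}(x-\hat{x}_{k})$ and $\tilde{f}_{2k}(x)=\tilde{g}_{2}(x-\hat{x}_{k})$ keeps the fields in $\mathcal{F}_{d}$, because translation only multiplies each Fourier coefficient by a unimodular phase (preserving both its direction $k^{\perp}$ and the frequency range). Each $A_{\tilde{f}_{i}}(\hat{x})$ then differs from the corresponding basis vector by at most $C\varepsilon$, and since the determinant of the $2n\times 2n$ matrix with columns $A_{\tilde{f}_{i}}(\hat{x})$ is continuous and equals $\pm1$ at the canonical basis, for $\varepsilon$ small this determinant stays nonzero, so the $\tilde{f}_{i}$ still span.

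The point I expect to require the most care is the \emph{uniformity} of $d$ over $\hat{x}\in G_{\delta}$, which is precisely what the statement demands. This is delicate only in appearance: the approximation is carried out once, on the fixed templates $g_{1},g_{2}$, so $\varepsilon$ and hence $d$ depend only on $\delta$ (through the support radius $\delta/2$) and on the chosen accuracy, never on $\hat{x}$; translating to the varying centers $\hat{x}_{k}$ preserves both membership in $\mathcal{F}_{d}$ and the value of $A_{\tilde{f}_{i}}(\hat{x})$. Consequently a single $d=d(\delta)$ works simultaneously for every $x\in G_{\delta}$, which completes the argument.
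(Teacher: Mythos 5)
Your proof is correct and follows essentially the same route as the paper's: the same compactly supported divergence-free templates $g_1,g_2$ (built from a radial stream function), translated to the vortex positions so that $\{A_{f_i}(\hat{x})\}$ is the canonical basis, then a trigonometric approximation of the two fixed templates so that $d$ depends only on $\delta$ and $\varepsilon$, not on $\hat{x}$. The only difference is that you justify membership of the approximants in $\mathcal{F}_d$ explicitly, via Fourier truncation of a solenoidal mean-zero field and invariance of $\mathcal{F}_d$ under translation, where the paper simply invokes density of trigonometric polynomials; this is a minor sharpening of the same argument, not a different one.
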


An easy implication is then

\begin{corollary}
For every $\delta> 0$ and $d > d_{0} (\delta)$, almost every realization of $2
n  M$ vector fields $\{f_{a, i} \in\mathcal{F}_{d} \}_{a = 1, \ldots, M, i =
1, \ldots, 2 n}$ is such that $\operatorname{span}  \{A_{f_{a, i}} (x)\}_{a =
1, \ldots, M, i = 1, \ldots, 2 n} =\mathbb{R}^{2 n}$ for all $x \in G_{\delta
}$.
\end{corollary}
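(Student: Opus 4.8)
The plan is to obtain the Corollary as an immediate consequence of the two results that directly precede it: the Lemma furnishes, for each fixed $\delta > 0$, a cutoff degree together with a spanning family at every point of $G_\delta$, and the Theorem just above converts such a pointwise, position-dependent construction into the desired uniform genericity statement. The whole force of that Theorem is exactly to bridge this gap, so the Corollary is essentially a packaging of the two.

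Concretely, I would fix $\delta > 0$ and take $d_0(\delta)$ to be the degree $d$ produced by the preceding Lemma, so that for every $x \in G_\delta$ there exist $f_1, \ldots, f_{2n} \in \mathcal{F}_{d_0(\delta)}$ with $\operatorname{span}\{A_{f_i}(x)\} = \mathbb{R}^{2n}$. For any $d > d_0(\delta)$ the monotonicity $\mathcal{F}_{d_0(\delta)} \subseteq \mathcal{F}_d$ of the spaces of solenoidal trigonometric vector fields shows that this same spanning family already lies in $\mathcal{F}_d$; hence the hypothesis of the preceding Theorem holds with the open set $G = G_\delta = \{x \in \mathbb{T}^{2n} : \min_{i \neq j} |x^i - x^j| > \delta\}$. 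Invoking that Theorem with this choice of $G$ then yields exactly the assertion of the Corollary, namely that Lebesgue-almost every realization of the $2nM$ coefficients defining $\{f_{a,i}\}$ produces a family $\{A_{f_{a,i}}\}$ spanning $\mathbb{R}^{2n}$ at every $x \in G_\delta$.

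Since both ingredients are already in hand, there is no genuine obstacle; the only steps requiring attention are the inclusion $\mathcal{F}_{d_0(\delta)} \subseteq \mathcal{F}_d$, which upgrades the Lemma's single degree to all larger $d$, and the openness of $G_\delta$, which is what lets the transversality-based Theorem apply. The substantive work --- passing from a point-by-point spanning construction to a uniform, full-measure statement via Theorem~\ref{th:transv} --- has already been carried out in the proof of that Theorem, so this Corollary merely records its application to the concrete family $G_\delta$.
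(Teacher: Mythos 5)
Your proposal is correct and matches the paper's (implicit) argument exactly: the paper presents this Corollary as "an easy implication" of the preceding Lemma and Theorem, which is precisely your combination --- take $d_0(\delta)$ from the Lemma, use the inclusion $\mathcal{F}_{d_0(\delta)} \subseteq \mathcal{F}_d$ for $d > d_0(\delta)$, and apply the genericity Theorem with $G = G_\delta$. Nothing further is needed.
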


By approximation of $C^{\infty}$ vector fields by elements in $\mathcal{F}%
_{d}$ we can conclude that also the set $Q_{\delta} \subset(C^{\infty
}(\mathbb{T}^{2};\mathbb{R}^{2}))^{2 n M}$ of $2 n M$ vector fields $\{f_{a,
i} \}_{a = 1, \ldots, M, i = 1, \ldots, 2 n}$ such that for all $x \in
G_{\delta}$ $\operatorname{span}  \{A_{f_{a, i}} (x)\}_{a = 1, \ldots, M, i =
1, \ldots, 2 n} =\mathbb{R}^{2 n}$ is dense in $(C^{\infty})^{2 n M}$.

Let us prove that $Q_{\delta}$ contains an open dense subset. For any compact
$K\subset\mathbb{T}^{2}$ define $Q_{K}$ as the subset of $(C^{\infty
}(\mathbb{T}^{2};\mathbb{R}^{2}))^{2 n M}$ which spans the full tangent space in
every point of $K$.

We first prove that the sets $Q_{K}$ are open: indeed assume that there exists
a sequence $(f^{(k)}_{i,a}) \in Q_{K}^{c}$ such that $f^{(k)}$ converge to a
point $f$ in $Q_{K}$. For each $f^{(k)}$ there exists a point $x^{(k)} \in K$
for which $\operatorname{span} (A_{f^{(k)}_{i,a}}(x^{(k)})) \neq
\mathbb{R}^{2n}$. By compactness of $K$ we can extract a subsequence, still
denoted by $(x^{k})_{k\ge1}$ which converges to $x\in K$. Then by uniform
convergence of $f^{(k)}$ to $f$ we deduce that we also have
$\operatorname{span} (A_{f_{i,a}}(x)) \neq\mathbb{R}^{2n}$ which is in
contradiction with the fact that $f\in Q_{K}$.

Then observe that for any $0< \delta^{\prime}< \delta$ there exists a compact
$K$ such that $G_{\delta} \subset K \subset G_{\delta^{\prime}}$ and then that
$Q_{\delta^{\prime}} \subset Q_{K} \subset Q_{\delta}$. The set $Q_{\delta
^{\prime}}$ is dense and contained in an open set $Q_{K}$ which proves that
the interior of $Q_{\delta}$ is both open and dense, that is a residual set
(or co-meagre).

At this point, by countable intersection, we get that $Q = \cap_{k} Q_{1 / k}$
is also residual and its elements are exactly the vector fields such that
$\operatorname{span}  \{A_{f_{a, i}} (x)\} =\mathbb{R}^{2 n}$ in every point
of $\Gamma^{c}$.
We have then proved Theorem~\ref{th:npoint-genericity}.

\appendix

\section{Remarks on hypoellipticity}

\label{sec:remarks} We want to clarify the role of the nondegeneracy condition
of the $n$-point  motion assumed in section \ref{section assumptions}.
Let us recall the following theorem. See for instance \cite{Nualart}, Theorem 2.3.2.

\begin{theorem}
\label{th:hormander}
Consider the stochastic equation in Stratonovich form in $\mathbb{R}^{m}$%
\[
X_{t}=x_{0}+\sum_{j=1}^{N}\int_{0}^{t}A_{j}( X_{s}) \circ dW_{s}^{j}+\int
_{0}^{t}A_{0}( X_{s}) ds
\]
with infinitely differentiable coefficients with bounded derivatives of all
order. Assume the following H\"{o}rmander's condition at point $x_{0}$: the
vector space spanned by the vector fields
\[
A_{1},...,A_{N},\qquad[ A_{i},A_{j}] ,0\leq i,j\leq N,\qquad[ A_{i},[
A_{j},A_{k}] ] ,0\leq i,j,k\leq N,...
\]
at point $x_{0}$ is $\mathbb{R}^{m}$. Then, for every $t>0$, the law of
$X_{t}$ is absolutely continuous with respect to the Lebesgue measure.
\end{theorem}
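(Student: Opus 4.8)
The plan is to prove this by Malliavin calculus, along the now-standard route to Hörmander's theorem. First I would establish that $X_{t} \in (\mathbb{D}^{1,2})^{m}$ (in fact $\mathbb{D}^{\infty}$) for every $t$: since the coefficients are smooth with bounded derivatives of all orders, the solution map is Malliavin differentiable and, for $s \leq t$, the derivative $D_{s}^{j} X_{t}$ solves the equation obtained by linearizing the SDE. This yields the representation
\[
D_{s}^{j} X_{t} = Y_{t}\, Y_{s}^{-1}\, A_{j}(X_{s}) \quad (s \leq t), \qquad D_{s}^{j} X_{t} = 0 \quad (s > t),
\]
where $Y_{t} = \partial X_{t}/\partial x_{0}$ is the Jacobian of the stochastic flow; $Y_{t}$ itself solves a linear SDE and is invertible almost surely.

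Second, I would form the Malliavin covariance matrix and factor it through the flow:
\[
\gamma_{t} = \sum_{j=1}^{N} \int_{0}^{t} D_{s}^{j} X_{t}\, (D_{s}^{j} X_{t})^{\ast}\, ds = Y_{t}\, C_{t}\, Y_{t}^{\ast}, \qquad C_{t} = \sum_{j=1}^{N} \int_{0}^{t} \big(Y_{s}^{-1} A_{j}(X_{s})\big)\big(Y_{s}^{-1} A_{j}(X_{s})\big)^{\ast}\, ds.
\]
The criterion of Bouleau--Hirsch states that a vector in $(\mathbb{D}^{1,2})^{m}$ whose Malliavin covariance matrix is almost surely invertible has a law absolutely continuous with respect to Lebesgue measure. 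Because $Y_{t}$ is a.s. invertible, invertibility of $\gamma_{t}$ is equivalent to invertibility of the reduced matrix $C_{t}$, so the whole problem reduces to showing that $C_{t}$ is a.s. invertible for every $t > 0$.

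Third --- and this is where the bracket condition enters and where the real difficulty lies --- I would prove a.s. invertibility of $C_{t}$. Suppose for contradiction that on an event of positive probability one has $v^{\ast} C_{t} v = 0$ for some unit vector $v$; then $s \mapsto v^{\ast} Y_{s}^{-1} A_{j}(X_{s})$ vanishes identically on $[0,t]$ for every $j$. The mechanism for generating the brackets is the Stratonovich identity, valid for any smooth vector field $V$,
\[
d\big(Y_{s}^{-1} V(X_{s})\big) = \sum_{j=1}^{N} Y_{s}^{-1}[A_{j},V](X_{s}) \circ dW_{s}^{j} + Y_{s}^{-1}[A_{0},V](X_{s})\, ds,
\]
so that the diffusion and drift coefficients of $Y_{s}^{-1} V(X_{s})$ are again of the same form, with $V$ replaced by a Lie bracket. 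The analytic engine is \emph{Norris's lemma}, a quantitative refinement of the fact that a semimartingale which stays uniformly small forces both its martingale and its finite-variation parts to be small: it lets me deduce from the vanishing of $v^{\ast} Y_{s}^{-1} A_{j}(X_{s})$ the vanishing of $v^{\ast} Y_{s}^{-1}[A_{k},A_{j}](X_{s})$, and then, by iteration, of $v^{\ast} Y_{s}^{-1} W(X_{s})$ for every iterated bracket $W$. Evaluating at $s = 0$, where $Y_{0} = I$, gives $v^{\ast} A_{j}(x_{0}) = v^{\ast}[A_{k},A_{j}](x_{0}) = \cdots = 0$; since by the Hörmander hypothesis these iterated brackets span $\mathbb{R}^{m}$ at $x_{0}$, we force $v = 0$, the desired contradiction.

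The hard part is unquestionably this third step: a measurable selection of the degeneracy direction $v$, and the repeated application of Norris's lemma to climb the bracket hierarchy while correctly tracking the Stratonovich corrections. For \emph{mere} absolute continuity one needs only a.s. invertibility of $C_{t}$, not inverse moments of $\det C_{t}$, which lightens the estimates compared with the proof of smoothness of the density; but the bracket-propagation argument is essentially the same and constitutes the technical core. In the interest of self-containedness I would quote the detailed estimates from \cite{Nualart} rather than reproduce them.
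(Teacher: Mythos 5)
The paper does not give a proof of this theorem at all: it is recalled as a known result with a pointer to \cite{Nualart}, Theorem 2.3.2, whose proof is exactly the Malliavin-calculus argument you outline (Malliavin differentiability and the flow representation of $D_s^j X_t$, the Bouleau--Hirsch absolute-continuity criterion reducing everything to a.s. invertibility of the reduced covariance matrix $C_t$, and Norris's lemma to propagate degeneracy up the bracket hierarchy and contradict the H\"ormander condition at $x_0$). Your proposal is therefore correct and coincides with the approach the paper relies on by citation.
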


When the vector fields $A_{1},...,A_{N}$ themselves span $\mathbb{R}^{m}$,
there is a simpler criterium, due to \cite{BouHir}. We recall a simplified
version of Theorem 2.3.1 from \cite{Nualart}. Denote by $A( x) $ the $m\times
N$ matrix with $A_{1}( x) ,...,A_{N}( x) $ as columns and by $\sigma( x) $ the
$m\times m$ matrix $A( x) A( x) ^{T}$.

\begin{theorem}
Let $(X_{t})_{t\geq0}$ be a solution of the It\^{o} equation  in
$\mathbb{R}^{m}$%
\begin{equation}
X_{t}=x_{0}+\sum_{j=1}^{N}\int_{0}^{t}A_{j}(X_{s})dW_{s}^{j}+\int_{0}^{t}%
A_{0}(X_{s})ds\label{Ito}%
\end{equation}
with
globally Lipschitz coefficients. Assume
\[
P(\int_{0}^{t}1_{\{\det\sigma(X_{s})\neq0\}}ds>0)=1
\]
for all $t>0$. Then, for every $t>0$, the law of $X_{t}$ is absolutely
continuous with respect to the Lebesgue measure.
\end{theorem}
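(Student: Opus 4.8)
The plan is to deduce this statement from the Malliavin calculus criterion for absolute continuity in the form due to Bouleau and Hirsch, which is designed precisely for coefficients that are only Lipschitz. Recall that this abstract criterion asserts: if an $\mathbb{R}^{m}$-valued random vector $F$ has components in the local Sobolev space $\mathbb{D}^{1,1}_{\mathrm{loc}}$ and its Malliavin covariance matrix $\gamma_{F}=(\langle DF^{i},DF^{j}\rangle)_{i,j}$ is almost surely invertible, then the law of $F$ is absolutely continuous with respect to Lebesgue measure. The whole argument thus reduces to two points: (i) showing that $X_{t}$ is Malliavin differentiable, and (ii) showing that its Malliavin matrix $\gamma_{t}$ is a.s. invertible under the stated nondegeneracy hypothesis.

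For step (i) I would first approximate the Lipschitz coefficients $A_{0},\ldots,A_{N}$ by smooth coefficients with bounded derivatives and, for the regularized equations, invoke the standard theory (as in Theorem~\ref{th:hormander} and \cite{Nualart}) to obtain solutions in $\mathbb{D}^{1,p}$ for every $p$. Passing to the limit with the uniform Lipschitz bounds yields $X_{t}\in\mathbb{D}^{1,2}$, the low regularity being absorbed by working in $\mathbb{D}^{1,1}_{\mathrm{loc}}$, where Rademacher's theorem allows one to replace the gradient of a Lipschitz field by its a.e.\ defined derivative. The Malliavin derivative then solves, for $s\le t$, the linear equation obtained by differentiating (\ref{Ito}), and admits the representation
\[
D^{j}_{s}X_{t}=Y_{t}\,Y_{s}^{-1}\,A_{j}(X_{s}),\qquad s\le t,
\]
where $Y_{t}$ is the first variation (Jacobian) process solving $Y_{t}=I+\sum_{j}\int_{0}^{t}\nabla A_{j}(X_{r})Y_{r}\,dW^{j}_{r}+\int_{0}^{t}\nabla A_{0}(X_{r})Y_{r}\,dr$. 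A basic fact about stochastic flows is that $Y_{t}$ is invertible for all $t$, almost surely.

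For step (ii), writing $A(x)$ for the $m\times N$ matrix with columns $A_{j}(x)$, the Malliavin matrix factorizes as $\gamma_{t}=\sum_{j}\int_{0}^{t}D^{j}_{s}X_{t}\,(D^{j}_{s}X_{t})^{*}\,ds=Y_{t}\,C_{t}\,Y_{t}^{*}$, with $C_{t}=\int_{0}^{t}Y_{s}^{-1}\sigma(X_{s})(Y_{s}^{-1})^{*}\,ds$ and $\sigma=AA^{*}$. Since $Y_{t}$ is a.s.\ invertible, $\gamma_{t}$ is invertible if and only if $C_{t}$ is positive definite. Now $C_{t}$ is a.s.\ positive semidefinite and, for any vector $v$, $v^{*}C_{t}v=\int_{0}^{t}|A(X_{s})^{*}(Y_{s}^{-1})^{*}v|^{2}\,ds$. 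I would argue $\omega$ by $\omega$ on the full-measure event where $s\mapsto X_{s},Y_{s}$ are continuous, $Y_{s}$ is invertible for every $s$, and $\int_{0}^{t}1_{\{\det\sigma(X_{s})\neq0\}}\,ds>0$: if $v^{*}C_{t}v=0$, the continuous integrand vanishes identically on $[0,t]$, and at any time $s_{0}$ with $\det\sigma(X_{s_{0}})\neq0$ the columns of $A(X_{s_{0}})$ span $\mathbb{R}^{m}$, forcing $(Y_{s_{0}}^{-1})^{*}v=0$ and hence $v=0$. Such an $s_{0}$ exists precisely because the hypothesis makes $\{\det\sigma(X_{s})\neq0\}$ have positive measure, hence be nonempty. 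Therefore $C_{t}$, and with it $\gamma_{t}$, is a.s.\ invertible, and the Bouleau--Hirsch criterion yields the density.

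The main obstacle is the honest treatment of step (i) under the mere Lipschitz assumption, namely establishing $X_{t}\in\mathbb{D}^{1,1}_{\mathrm{loc}}$ and justifying the flow representation of $D_{s}X_{t}$ when $\nabla A_{j}$ exists only almost everywhere; this is exactly the technical content of the Bouleau--Hirsch refinement. Once differentiability and the variational representation are in place, step (ii) is a short linear-algebra and continuity argument driven directly by the nondegeneracy hypothesis.
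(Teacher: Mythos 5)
You should first be aware that the paper contains no proof of this statement: it is recalled, in ``simplified'' form, from Theorem 2.3.1 of \cite{Nualart}, and is originally due to \cite{BouHir}; the only thing the paper proves itself in the appendix is the easy Corollary~\ref{Corollary 2 Appendix} deduced from it. So your attempt is really being compared against the standard literature proof, and in outline you have reconstructed exactly that proof: (a) the Bouleau--Hirsch criterion (components in $\mathbb{D}^{1,p}_{\mathrm{loc}}$ plus a.s.\ invertible Malliavin matrix implies a density), (b) Malliavin differentiability of $X_t$ under Lipschitz coefficients by smoothing the coefficients, keeping uniform Lipschitz bounds, and using closedness of the derivative operator, (c) the factorization $\gamma_{t}=Y_{t}C_{t}Y_{t}^{*}$ with $C_{t}=\int_{0}^{t}Y_{s}^{-1}\sigma(X_{s})(Y_{s}^{-1})^{*}\,ds$, and (d) the continuity argument: if $v^{*}C_{t}v=0$ then the continuous nonnegative integrand vanishes on all of $[0,t]$, and any single time $s_{0}$ with $\det\sigma(X_{s_{0}})\neq 0$ (which the hypothesis provides a.s., since a set of positive Lebesgue measure is nonempty) forces $v=0$. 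Your step (d) is complete and correct as written, and it is the step where the paper's nondegeneracy hypothesis actually enters.

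The one point where your sketch, taken literally, would fail is the justification of the variational representation in step (i): you propose to handle the Lipschitz coefficients by invoking Rademacher's theorem to ``replace the gradient of a Lipschitz field by its a.e.\ defined derivative.'' This is circular. Rademacher defines $\nabla A_{j}$ only off a Lebesgue-null set, and the composition $\nabla A_{j}(X_{r})$ is meaningful only if the law of $X_{r}$ puts no mass on null sets --- which is precisely the absolute continuity you are trying to prove. The correct route (the one in \cite{BouHir} and \cite{Nualart}) is different: the Lipschitz chain rule of Malliavin calculus yields only the existence of \emph{some} adapted matrix-valued processes, uniformly bounded by the Lipschitz constants but not identifiable with $\nabla A_{j}(X_{r})$, such that $D_{s}X_{t}$ solves the associated linear equation; one then defines $Y_{t}$ as the solution of the matrix linear SDE driven by those same bounded processes, obtains invertibility of $Y_{t}$ and the representation $D_{s}^{j}X_{t}=Y_{t}Y_{s}^{-1}A_{j}(X_{s})$ by uniqueness for linear SDEs, and runs your step (ii) verbatim --- note that (ii) never uses what the bounded coefficient processes actually are, only that $Y_{s}$ is continuous and invertible. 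So the defect is confined to the point you yourself flagged as the technical content of the Bouleau--Hirsch refinement, and it is repairable; but the repair is not Rademacher's theorem, and naming Rademacher as the mechanism is the one genuinely wrong step in your write-up.
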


\begin{corollary}
\label{Corollary 2 Appendix}Let $(X_{t})_{t\geq0}$ be a solution of the
It\^{o} equation (\ref{Ito}), with globally Lipschitz coefficients. If
$A_{1}(x),...,A_{N}(x)$ generate $\mathbb{R}^{m}$ at $x=x_{0}$, then, for
every $t>0$, the law of $X_{t}$ is absolutely continuous with respect to the
Lebesgue measure.
\end{corollary}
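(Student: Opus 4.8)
The plan is to verify the single hypothesis of the preceding theorem (the Bouleau--Hirsch criterion recalled just above), namely that
$P(\int_0^t 1_{\{\det\sigma(X_s)\neq0\}}\,ds>0)=1$ for all $t>0$, and then to invoke that theorem verbatim. The whole argument is a localization around the starting point $x_0$.

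First I would translate the spanning hypothesis into a nondegeneracy statement for $\sigma$. Saying that $A_1(x_0),\ldots,A_N(x_0)$ generate $\mathbb{R}^m$ means exactly that the $m\times N$ matrix $A(x_0)$ has rank $m$; consequently the symmetric matrix $\sigma(x_0)=A(x_0)A(x_0)^T$ is positive definite, so $\det\sigma(x_0)\neq0$. Since the coefficients are globally Lipschitz, the map $x\mapsto A(x)$ is continuous, hence so is $x\mapsto\det\sigma(x)$. Therefore the set $\{x:\det\sigma(x)\neq0\}$ is open and contains $x_0$, and I can fix $r>0$ with the ball $B(x_0,r)$ contained in it.

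Next I would use the continuity of trajectories together with $X_0=x_0$. Define the exit time $\tau=\inf\{s\geq0:|X_s-x_0|\geq r\}$; by path continuity and the fact that $x_0$ lies in the interior of $B(x_0,r)$ one has $\tau>0$ almost surely. On $[0,\tau)$ the solution stays in $B(x_0,r)$, so $\det\sigma(X_s)\neq0$ there, whence for every $t>0$
\[
\int_0^t 1_{\{\det\sigma(X_s)\neq0\}}\,ds\;\geq\;t\wedge\tau\;>\;0\qquad\text{a.s.}
\]
This is precisely the hypothesis required, and the preceding theorem then gives absolute continuity of the law of $X_t$ for every $t>0$.

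I do not expect a genuine obstacle here: the statement is a direct specialization of the Bouleau--Hirsch criterion. The only point needing (minor) care is the almost-sure positivity of the exit time $\tau$, which is immediate from continuity of paths; everything else is the elementary observation that full rank of $A(x_0)$ forces $\det\sigma$ to be nonzero in a whole neighborhood of $x_0$.
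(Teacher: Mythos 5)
Your proposal is correct and follows essentially the same route as the paper's own proof: both localize around $x_{0}$, using continuity of the coefficients to get $\det\sigma\neq 0$ on a neighborhood of $x_{0}$, and continuity of paths to keep $X_{s}$ in that neighborhood for a positive random time, thereby verifying the hypothesis $P(\int_{0}^{t}1_{\{\det\sigma(X_{s})\neq0\}}\,ds>0)=1$ of the Bouleau--Hirsch-type theorem. Your write-up merely spells out the lower bound $\int_{0}^{t}1_{\{\det\sigma(X_{s})\neq0\}}\,ds\geq t\wedge\tau>0$ a little more explicitly than the paper does.
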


\begin{proof}
Since the fields are continuous, $A_{1}( x) ,...,A_{N}( x) $ generate
$\mathbb{R}^{m}$ at all points of a neighbor $\mathcal{U}$ of $x_{0}$. The
solution $( X_{t}) _{t\geq0}$ has continuous paths, thus belongs to
$\mathcal{U}$ at least over a small random time interval $[ 0,\tau] $, $P(
\tau>0) =1$. On $\mathcal{U}$\ we have $\det\sigma( x) \neq0$, hence the
assumption of the theorem is satisfied. The proof is complete.
\end{proof}

\end{document}